\newtheorem{thm}{Theorem}[section]
\newtheorem{cor}[thm]{Corollary}
\newtheorem{prop}[thm]{Proposition}
\theoremstyle{definition}
\newtheorem{defn}[thm]{Definition}
\theoremstyle{remark}
\numberwithin{equation}{section}
\DeclareMathOperator{\Der}{Der}
\DeclareMathOperator{\Ann}{Ann}
\begin{document}

\title[Solvable Leibniz algebras with quasi-filiform of maximum length nilradical]{Solvable Leibniz algebras whose nilradical is a quasi-filiform Leibniz algebra of maximum length}

\author{Q.K. Abdurasulov$^1$, J.Q. Adashev$^2$, J.M. Casas$^3$, B.A. Omirov$^4$, }
\address{$^{1,2}$ Institute of Mathematics, Tashkent, 100125, Uzbekistan, \\
abdurasulov0505@mail.ru, adashevjq@mail.ru} 
\address{$^{3}$ Department of Applied Mathematics I, E. E. Forestal, University of Vigo, \\ 36005 Pontevedra, Spain, jmcasas@uvigo.es}
\address{$^{4}$ National University of Uzbekistan, Tashkent, 100174, Uzbekistan, omirovb@mail.ru}

\begin{abstract}  We describe solvable Leibniz algebras whose nilradical is a quasi-filiform Leibniz algebra of maximum length.
\end{abstract}

\subjclass[2010] {17A32, 17A36, 17B30, 17B56.}

\keywords{Lie algebra, Leibniz algebra, solvable algebra, abelian algebra, nilradical, gradation, quasi-filiform algebra, maximal length.}

\maketitle





\section{Introduction}

Leibniz algebras are generalizations of Lie algebras and they have been introduced by J.-L. Loday in \cite{Loday} as a non-antisymmetric version of Lie algebras. These algebras preserve a unique property of Lie algebras - the right multiplication operators are derivations. Many classical results of the theory of Lie algebras were extended to the case of Leibniz algebras. For instance, the analogue of Levi's theorem for Leibniz algebras was proved by Barnes \cite{Bar}. He showed that any finite-dimensional Leibniz algebra is decomposed into the semidirect sum of solvable radical and semisimple Lie subalgebra. Therefore, the biggest challenge in the classification problem of finite-dimensional Leibniz algebras is the study of solvable part.
Due to \cite{Nulfilrad}, where the method of the description of solvable Lie algebras with a given nilradical developed in \cite{Mub} was extended to the case of Leibniz algebras, the problem of classification of solvable Leibniz algebras reduces to study of nilpotent one.

The inherent property of non Lie Leibniz algebras is the existence of the non-trivial ideal, generated by the squares of elements of an algebra.

The approach to description of Lie algebras with a given nilradical is based on the method developed by Malcev in \cite{Mal} and Mubarakzyanov in \cite{Mub}. To descriptions of solvable Lie algebras with various types of nilradicals were devoted papers \cite{NdWi},\cite{Rubin},\cite{Shab2, SnWi, SnWi2, TrWi, WaLiDe}. For the case of solvable Leibniz algebras we have  similar results, that is, there are classifications of solvable Leibniz algebras with various type of nilradical,
like Heisenberg nilradical \cite{Dunbar}, filiform \cite{Camacho6}, naturally graded filiform \cite{filrad},\cite{Ladra}, triangular \cite{Iqbol}, direct sum of null-filiform algebras \cite{Abror2}, some classes of naturally graded quasi-filiform Leibniz algebras \cite{Shab} etc.

The starting point of the present paper is quasi-filiform Leibniz algebras of maximal length \cite{Cabezas2}, \cite{Camacho} considered further as nilradicals of solvable Leibniz algebras.

Since solvable Leibniz algebras whose nilradical is quasi-filiform Lie algebra of maximal length were classified in \cite{Khosiyat}, we consider the case when nilradical is quasi-filiform non Lie algebra of maximum length.

Throughout the paper vector spaces and algebras are finite-dimensional over the field of the complex numbers. Moreover, in the table of multiplication of any algebra the omitted products are assumed to be zero
and, if it is not noted, we consider non-nilpotent solvable algebras.

\section{Preliminaries}

In this section we briefly give necessary definitions and preliminary results which can be found also in \cite{Camacho}, \cite{Nulfilrad}, \cite{Loday}, \cite{Shab}.

\begin{defn} A  vector space with a bilinear bracket $(L,[-,-])$ over a field $\mathbb{F}$ is called a \emph{Leibniz algebra} if for any $x,y,z\in L$ the Leibniz identity
\[ \big[x,[y,z]\big]=\big[[x,y],z\big] - \big[[x,z],y\big] \]  holds.
\end{defn}

Further we   use the notation
\[ {\mathcal L}(x, y, z)=[x,[y,z]] - [[x,y],z] + [[x,z],y].\]
It is obvious that Leibniz algebras are determined by the identity ${\mathcal L}(x, y, z)=0$.

From the Leibniz identity we conclude that the elements $[x,x], [x,y]+[y,x]$ for any $x, y \in L$ belong to the right annihilator (denoted by $\Ann_r(L)$) of an algebra $L$. Moreover, it is easy to see that $\Ann_r(L)$ is a two-sided ideal of $L$.

The notion of a derivation in the case of Leibniz algebras is defined as usual, that is, a linear map $d \colon L \rightarrow L$ of a Leibniz algebra $L$ is said to be a \emph{derivation} if it satisfies  \begin{equation}\label{eq0}
d([x,y])=[d(x),y] + [x, d(y)] \ \mbox{for any} \ x, y, \in L.
\end{equation}

Note that the right multiplication operator $\mathcal{R}_x \colon L \to L, \mathcal{R}_x(y)=[y,x],  y \in L$, is a derivation.

For a given Leibniz algebra $L$ we consider the lower central and the derived series
\[L^1=L, \ L^{k+1}=[L^k,L],  \ k \geq 1, \qquad \qquad
L^{[1]}=L, \ L^{[s+1]}=[L^{[s]},L^{[s]}], \ s \geq 1,
\]
respectively.

\begin{defn} A Leibniz algebra $L$ is said to be
\emph{nilpotent} (respectively, \emph{solvable}), if there exists $n\in\mathbb N$ ($m\in\mathbb N$) such that $L^{n}=0$ (respectively, $L^{[m]}=0$).
\end{defn}

Evidently, for an $n$-dimensional nilpotent Leibniz algebra $L$ we have $L^{n+1}=0$.

The maximal nilpotent ideal of a Leibniz algebra is called the \emph{nilradical} of the  algebra.

Let $R$ be a solvable Leibniz algebra with nilradical $N$. We denote by $Q$ the complementary vector space  of the nilradical $N$ to the algebra $R$.
 Let us consider the restrictions to $N$ of the right multiplication operator on an element $x \in Q$ (denoted by $\mathcal{R}_{{x |}_{N}}$). Thanks to \cite{Nulfilrad} we know that for any $x \in Q$, the operator $\mathcal{R}_{{x |}_{N}}$ is a non-nilpotent  derivation of $N$.

Let $\{x_1, \dots, x_m\}$ be a basis of $Q$, then for any scalars $\{\alpha_1, \dots, \alpha_m\}\in
\mathbb{C}\setminus\{0\}$, the matrix $\alpha_1\mathcal{R}_{{x_1 |}_{N}}+\dots+\alpha_m\mathcal{R}_{{x_m|}_{N}}$ is non-nilpotent, which means that the elements $\{x_1, \dots, x_m\}$ are \emph{nil-independent} \cite{Mub}. Therefore, the dimension of $Q$  is bounded by the maximal number of nil-independent derivations of the nilradical $N$ (see \cite[Theorem 3.2]{Nulfilrad}). Moreover, similar to the case of Lie algebras, for a solvable Leibniz algebra $R$ the inequality $\dim N \geq \frac{1}{2}\dim R$ holds.

%

Below we define the notion of a quasi-filiform Leibniz algebra.

\begin{defn} A Leibniz algebra $L$ is called quasi-filiform if $L^{n-2}\neq 0$ and $L^{n-1}=0$, where $n=\dim L.$
\end{defn}

%

A Leibniz algebra $L$ is called $\mathbb{Z}$-graded if $L =\oplus_{i\in \mathbb{Z}}V_i,$ where $[V_i, V_j]\subseteq V_{i+j}$ for any $i, j \in \mathbb{Z}$ with a finite number of non-null spaces $V_i.$

A gradation $L = V_{k_1}\oplus\cdots\oplus V_{k_t}$ of a Leibniz algebra $L$ is called \emph{connected gradation} if $V_{k_i}\neq0$ for any $i \  (1\leq i\leq t)$ and the number $l(\oplus L):=l(V_{k_1}\oplus\cdots\oplus V_{k_t})=k_t-k_1 + 1$ is called \emph{the length of the gradation}.

\begin{defn} A Leibniz algebra $L$ is called to be \emph{of maximum length} if $max\{l(\oplus L)\ \mbox{such that}\  L = V_{k_1}\oplus\cdots\oplus V_{k_t}\  \mbox{is a connected gradation}\} = \dim(L).$
\end{defn}

In the following theorem we give the classification of quasi-filiform non Lie Leibniz algebras of maximum length given in \cite{Cabezas2} and \cite{Camacho}.

\begin{thm} \label{thmquasi-fil} An arbitrary $n$-dimensional quasi-filiform non Lie Leibniz algebra of maximum length is isomorphic to one algebra of the following pairwise non-isomorphic algebras of the families:
\[M^{1,\delta}:\left\{\begin{array}{ll}
[e_1, e_1] =e_{n}, & [e_{n-1}, e_1] =e_{2},\\[1mm]
[e_i,e_1]=e_{i+1}, & 2\leq i\leq n-3,\\[1mm]
[e_{n-1}, e_{n-1}] =\delta e_{4}, & \delta\in\{0,1\},\\[1mm]
[e_i,e_{n-1}]=\delta e_{i+3}, & 2\leq i\leq n-5,\\[1mm]
 \end{array}\right. \qquad M^{2,\lambda}:\left\{\begin{array}{ll}
[e_i,e_1]=e_{i+1}, & 1\leq i\leq n-3,\\[1mm]
[e_{n-1},e_1]=e_{n}, &\\[1mm]
[e_1,e_{n-1}]=\lambda e_{n}, & \lambda\in \mathbb{C},\\[1mm]
 \end{array}\right.\]
\[M^{3,\alpha}:\left\{\begin{array}{ll}
[e_1, e_1] =e_{2}, &\\[1mm]
[e_i,e_1]=e_{i+1}, & 3\leq i\leq n-1,\\[1mm]
[e_1,e_i]=-e_{i+1}, & 3\leq i\leq n-1,\\[1mm]
[e_3, e_{3}] =\alpha e_{6}, & \alpha=0, \ if\  n>6,\\[1mm]
& \alpha\in\{0,1\}, \ if\  n=6,\\[1mm]
 \end{array}\right. \qquad
M^4:\left\{\begin{array}{ll}
 [e_i,e_1]=e_{i+1}, & 1\leq i\leq n-3,\\[1mm]
  [e_{1}, e_{n-1}] =e_{n},&\\[1mm]
  \end{array}\right.\]
where $\{e_1,e_2,\dots,e_{n}\}$ is a basis of the algebra.
\end{thm}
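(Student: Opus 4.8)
The plan is to let the maximum-length hypothesis pin down a rigid homogeneous basis and then let the Leibniz identity determine the rest. First I would fix a connected gradation $L = V_{k_1} \oplus \cdots \oplus V_{k_t}$ realizing $l(\oplus L) = \dim L = n$. Since the length $k_t - k_1 + 1$ equals $n$ while the components sum in dimension to $n$, every nonzero homogeneous component must be one-dimensional and the weights must exhaust $n$ consecutive integers; hence there is a homogeneous basis $\{e_1, \dots, e_n\}$ whose degrees increase by one. The inclusion $[V_i, V_j] \subseteq V_{i+j}$ then forces $[e_i, e_j]$ to be a scalar multiple of the single basis vector of weight $\deg e_i + \deg e_j$ (or zero), so the entire multiplication table is encoded by finitely many structure constants.

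Next I would determine the admissible weight systems. Because $L$ is quasi-filiform, the lower central series $L \supseteq L^2 \supseteq \cdots \supseteq L^{n-2} \supseteq L^{n-1} = 0$ is a chain of graded ideals, so a homogeneous basis of $L/L^2$ provides homogeneous generators, and the maximum-length requirement forces these generators to sit at the extremal admissible weights. Comparing the natural gradation induced by the quotients $L^k / L^{k+1}$ with the maximum-length one restricts the possible degree sequences of $e_1, \dots, e_n$ to finitely many (up to an overall shift and the reflection $V_k \mapsto V_{-k}$ of the grading), and in each case it singles out a principal generator $e_1$ whose right multiplication realizes the shift $e_i \mapsto e_{i+1}$ along the principal chain.

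With the weights fixed, I would substitute the homogeneous basis into the defining identity $\mathcal{L}(x, y, z) = 0$ for all triples of basis vectors. As every product is a scalar times a basis vector, each instance collapses to a quadratic relation among the structure constants, and collecting them gives a finite polynomial system that I would solve directly. Here the hypothesis that $L$ is \emph{non} Lie is used to discard the degenerate branches: the facts that $[x,x]$ and $[x,y]+[y,x]$ lie in $\Ann_r(L)$ pick out where a genuine symmetric part can appear, and the location of the nonzero self-product $[e_1, e_1]$ or of the asymmetric products $[e_1, e_i]$ is what ultimately separates the four families $M^{1,\delta}$, $M^{2,\lambda}$, $M^{3,\alpha}$, and $M^4$.

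Finally I would normalize the surviving constants by applying the most general basis change compatible with the descending central series, namely rescalings of the $e_i$ together with the admissible corrections $e_i \mapsto e_i + \sum_{\deg e_l > \deg e_i} c_{il} e_l$, and track how the remaining free constants transform. This scales the nonzero parameters to the canonical values $\delta \in \{0,1\}$, $\alpha \in \{0,1\}$, and the continuous invariant $\lambda \in \mathbb{C}$, while the same bookkeeping shows that distinct admissible parameter values yield pairwise non-isomorphic algebras. In my view the main obstacle is not any single computation but the combined case analysis: one must enumerate every weight system compatible with both the quasi-filiform and the maximum-length conditions, solve the Leibniz system on each, and verify that precisely the four listed families survive, with no hidden isomorphisms among them.
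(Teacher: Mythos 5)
You should know at the outset that the paper contains no proof of this statement to compare against: Theorem~\ref{thmquasi-fil} is quoted as a known result from \cite{Cabezas2} and \cite{Camacho}, so the only meaningful comparison is with the arguments in those references. Your opening reduction is correct and is indeed how such proofs begin: a connected gradation of length $n$ on an $n$-dimensional algebra has $n$ consecutive weights with one-dimensional components, so the whole product is encoded by scalars $[e_i,e_j]=c_{ij}\,e_{i+j}$ with indices read as weights. After that, however, two of your structural claims are wrong. First, the generators are \emph{not} forced to sit at extremal weights: in the listed algebras $M^{1,\delta}$ and $M^{3,\alpha}$ the second generator sits at weight $3$, and in $M^{2,\lambda}$ and $M^4$ at weight $n-1$; only $e_1$ occupies an end of the weight interval. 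So the step that is supposed to pin down the degree sequences fails as stated. Second, the normalization ``up to an overall shift'' of the weights is not available: if $L=\oplus_k V_k$ is a gradation, the re-indexed decomposition $W_k:=V_{k+c}$ is no longer a gradation, since $[W_i,W_j]\subseteq W_{i+j+c}$. The admissible weight intervals are genuine unknowns --- nilpotent algebras do admit connected gradations containing the weight $0$ or negative weights (the Heisenberg algebra graded by $V_{-1}\oplus V_0\oplus V_1$ already has maximum length) --- so ruling out all intervals other than $\{1,\dots,n\}$ is part of the theorem, not a harmless normalization.

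The more serious issue is that everything carrying the mathematical weight is deferred: ``enumerate every weight system'' and ``solve the polynomial system directly'' \emph{is} the theorem, and nothing in your sketch shows this enumeration is finite or tractable. The proofs in \cite{Cabezas2} and \cite{Camacho} do not attack the gradation from scratch; they use as essential input the prior classification of naturally graded quasi-filiform Leibniz algebras: for any quasi-filiform $L$, the associated graded algebra $\gr L$ with respect to the lower central series belongs to a known finite list, one lifts an adapted basis from that list, and only then determines which filtered deformations admit a connected gradation of length $n$. This is exactly what bounds the case analysis and produces the four families; your passing remark about ``comparing the natural gradation with the maximum-length one'' would need to be expanded into precisely this machinery (or a substitute finiteness argument) for the plan to close. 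Finally, for the pairwise non-isomorphism you restrict to rescalings and triangular corrections compatible with the gradation, but an abstract isomorphism need only preserve the lower central series filtration, not your chosen gradation; distinguishing $\delta=0$ from $\delta=1$ in $M^{1,\delta}$, or different values of $\lambda$ in $M^{2,\lambda}$, requires running the general change of basis (as the paper itself does in its Theorem~\ref{thm35} for a different classification) or exhibiting invariants. As it stands, your proposal is a sensible road map whose shape matches the published argument, but the two incorrect normalizations and the missing finiteness input leave the actual classification unproved.
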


For solvable Leibniz algebras with $n$-dimensional nilradical $M$ and with $s$-dimensional complemented space to the nilradical we use the notation $R(M,s)$.

Thanks to work \cite{Abdurasulov} we already have the classification of solvable Leibniz algebras $R(M^4,1)$, while there is no solvable Leibniz algebra of the family $R(M^4,2)$. Namely, we have the following theorem.

\begin{thm} \label{thmM^4} An arbitrary solvable Leibniz algebra $R(M^4,1)$ is isomorphic to one of the following pairwise non-isomorphic algebras of the family:

$$R(M^4,1)(a_2,\dots,a_{n-1}):\left\{\begin{array}{ll}
[e_i,x]=\sum\limits_{j=i+1}^{n-2}a_{j-i+1}e_j, & 1\leq i\leq n-2,\\[1mm]
[e_{n-1},x]=e_{n-1},& [e_{n},x]=e_{n},\\[1mm]
[x,e_{n-1}]=-e_{n-1},& [x,x]=a_{n-1}e_{n-2},\\[1mm]
  \end{array}\right.$$
where the first non-vanishing parameter $a_2, \dots, a_{n-1}$ in the family $R(M^4,1)(a_2,\dots,a_{n-1})$ can be scaled to $1$.
\end{thm}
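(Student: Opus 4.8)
The plan is to follow the Mubarakzyanov-type method of \cite{Nulfilrad}: write $R=M^4\oplus Q$ with $Q=\langle x\rangle$, and use that $d:=\mathcal{R}_x|_{M^4}$ is a non-nilpotent derivation of $M^4$. First I would describe $\Der(M^4)$. Since $e_2,\dots,e_{n-2}$ and $e_n$ are products while $M^4$ is generated by $e_1$ and $e_{n-1}$, a derivation $d$ is determined by $d(e_1)$ and $d(e_{n-1})$ through $d(e_{i+1})=[d(e_i),e_1]+[e_i,d(e_1)]$ for $1\leq i\leq n-3$ and $d(e_n)=[d(e_1),e_{n-1}]+[e_1,d(e_{n-1})]$. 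Comparing diagonal parts, a diagonalizable derivation carries weights $\lambda_i=i\lambda_1$ for $1\leq i\leq n-2$, weight $\lambda_{n-1}$ on $e_{n-1}$ and $\lambda_1+\lambda_{n-1}$ on $e_n$, so a priori the diagonalizable derivations form a two-parameter family.

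Setting $[e_i,x]:=d(e_i)$, the decisive step is the Leibniz identity $\mathcal{L}(x,e_{n-1},e_1)=0$. Since $[e_{n-1},e_1]=0$ it reduces to $[[x,e_1],e_{n-1}]=[[x,e_{n-1}],e_1]$; because $e_1\notin\Ann_r(R)$, the relation $[x,e_1]+[e_1,x]\in\Ann_r(R)$ forces the $e_1$-component of $[x,e_1]$ to equal $-\lambda_1$, so the left-hand side equals $-\lambda_1e_n$, whereas the right-hand side lies in $\langle e_2,\dots,e_{n-2}\rangle$ and has no $e_n$-component. Hence $\lambda_1=0$. With $\lambda_1=0$ the element $d(e_1)=\sum_{j=2}^{n-2}a_je_j$ satisfies $[e_i,d(e_1)]=0$, so the recursion collapses to the shift $d(e_{i+1})=[d(e_i),e_1]$, giving the Toeplitz shape $[e_i,x]=\sum_{j=i+1}^{n-2}a_{j-i+1}e_j$; rescaling $x$ normalizes the surviving weight to $\lambda_{n-1}=1$, which yields $[e_{n-1},x]=e_{n-1}$ and $[e_n,x]=e_n$. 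The same constraint shows that any non-nilpotent derivation realizable in such an extension has one-dimensional diagonalizable part, which is exactly why no algebra $R(M^4,2)$ exists.

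Next I would determine the left multiplications by $x$ and the square $[x,x]$. The identity $\mathcal{L}(x,e_1,e_1)=[x,e_2]=0$ and then inductively $\mathcal{L}(x,e_i,e_1)=[x,e_{i+1}]$ give $[x,e_i]=0$ for $2\leq i\leq n-2$, while $\mathcal{L}(e_1,x,e_{n-1})=0$ contributes $(c+1)e_n=0$ for the $e_{n-1}$-component $c$ of $[x,e_{n-1}]$, whence $[x,e_{n-1}]=-e_{n-1}$; a similar analysis of the remaining identities and of the annihilator relations gives $[x,e_1]=0$ and $[x,e_n]=0$. Finally $[x,x]$ lies in $\Ann_r(R)=\langle e_2,\dots,e_{n-2},e_n\rangle$, and the remaining identities containing two copies of $x$, together with the substitution $x\mapsto x+n_0$ $(n_0\in M^4)$, collapse it to the single term $[x,x]=a_{n-1}e_{n-2}$. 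This fixes the multiplication table up to the parameters $a_2,\dots,a_{n-1}$.

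The remaining task is normalization and rigidity. Using the residual freedom — the automorphisms of $M^4$ together with the substitutions $x\mapsto\beta x+n_0$ — I would compute the induced action on the tuple $(a_2,\dots,a_{n-1})$, show that the first non-vanishing parameter can be scaled to $1$ without creating any further relation, and then verify that two tables with distinct normalized tuples are non-isomorphic. I expect this last step to be the main obstacle: unlike the single clean identity that pins down the eigenvalues, proving pairwise non-isomorphism requires the explicit description of how the full group of admissible base changes acts on the parameter space, and the accompanying invariants have to be extracted by hand.
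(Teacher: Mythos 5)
You could not have known this, but the paper does not prove Theorem~\ref{thmM^4} at all: it is imported wholesale from \cite{Abdurasulov}, so there is no in-paper proof to compare against. The meaningful comparison is with the template the authors use for their own nilradicals ($M^{1,0}$, $M^{2,\lambda}$, $M^{3,1}$), and your plan is exactly that template: compute $\Der(M^4)$, write the generic extension, exploit $\Ann_r(R)$ and the Leibniz identity, then normalize by base changes. Your decisive step is correct, and I verified it: since $[e_{n-1},e_1]=0$, the identity $\mathcal{L}(x,e_{n-1},e_1)=0$ reduces to $[[x,e_1],e_{n-1}]=[[x,e_{n-1}],e_1]$; because elements of $\Ann_r(R)\cap M^4$ have no $e_1$-component, the $e_1$-coefficient of $[x,e_1]$ is $-\lambda_1$, so one side equals $-\lambda_1 e_n$ while the other lies in $\langle e_2,\dots,e_{n-2}\rangle$, forcing $\lambda_1=0$. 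This is the right mechanism, and it does simultaneously explain why $R(M^4,2)$ is empty even though $\Der(M^4)$ has a two-parameter diagonal part.

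The genuine gaps are in everything after that step. First, the middle of your argument is too quick. With $\lambda_1=0$ one still has $[e_1,x]=\sum_{t\geq 2}a_te_t$ with possible $e_{n-1}$- and $e_n$-components, and $[e_{n-1},x]$ with $e_{n-2}$- and $e_n$-components; your ``recursion collapses to the shift'' presumes the $e_{n-1}$-component of $[e_1,x]$ is already gone (otherwise $[e_1,d(e_1)]=a_{n-1}e_n\neq 0$), and ``rescaling $x$ yields $[e_{n-1},x]=e_{n-1}$, $[e_n,x]=e_n$'' is false as stated: $e_{n-1}$ and $e_n$ share the same eigenvalue, so the $e_n$-component of $[e_{n-1},x]$ is a potential Jordan block, and no admissible basis change removes it (any new $e_{n-1}'$ must lie in $\langle e_{n-2},e_{n-1},e_n\rangle$ to keep $[e_{n-1}',e_1]=0$, and the $e_n$-term survives every such change); it has to be killed by a further identity, e.g.\ the $e_n$-component of $\mathcal{L}(x,e_{n-1},x)=0$, which only works after the previous cleaning. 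These computations are the actual content of the reduction, not ``similar analysis.'' Second, and more seriously, the clauses ``pairwise non-isomorphic'' and ``the first non-vanishing parameter can be scaled to $1$'' are part of the statement, and you explicitly defer them as ``the main obstacle.'' In the paper's treatment of the completely analogous family $R_1(M^{1,0},1)(\alpha_2,\dots,\alpha_n)$ this step is a separate theorem (Theorem~\ref{thm35}) whose proof computes the action of all admissible base changes $e_1'=\sum A_te_t$, $e_{n-1}'=\sum B_te_t$, $x'=Hx+\sum C_te_t$ on the parameter tuple; until the same is done for $M^4$, you have shown at best that every algebra $R(M^4,1)$ admits a table of the stated shape, not the classification statement itself.
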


Moreover, in \cite{Shab} the description of solvable Leibniz algebras with nilradical $M^{3, 0}$ is presented.

\begin{thm} \label{thm31}An arbitrary solvable Leibniz algebra $R(M^{3,0}, 1)$ is isomorphic to one of the following pairwise non-isomorphic algebras:

$$R_1(M^{3,0}, 1):\left\{\begin{array}{llll}
[e_1,x]=e_1, & [e_2,x]=2e_2, & [e_i,x]=(i-n)e_i,& 3\leq i\leq n-1,\\[1mm]
[x,x]=e_{n},& [x,e_1]=-e_1,& [x,e_i]=(n-i)e_i,& 3\leq i\leq n-1,\\[1mm]
 \end{array}\right.$$
$$R_2(M^{3,0}, 1)(\alpha_i,\beta_j):\left\{\begin{array}{llll}
[e_{1},x]=\alpha_1e_{2},& [e_i,x]=e_i+\alpha_2 e_{i+2}+\sum\limits_{k=i+3}^{n}\beta_{k-i-2}e_k,& 3\leq i\leq n,\\[1mm]
[x,e_{1}]=\alpha_3e_{2},& [x,e_i]=-e_i-\alpha_2 e_{i+2}-\sum\limits_{k=i+3}^{n}\beta_{k-i-2}e_k,& 3\leq i\leq n,\\[1mm]
[x,x]=\alpha_4e_{2}, & \alpha_2\in \{0,\pm1\},&\\[1mm]
\end{array}\right.$$
$$R_3(M^{3,0}, 1)(\alpha):\left\{\begin{array}{llll}
[e_1,x]=e_1,& [e_2,x]=2e_2,& [x,e_1]=-e_1,\\[1mm]
[e_i,x]=(i-3+\alpha)e_i,& 3\leq i\leq n, \\[1mm]
[x,e_i]=(3-i-\alpha)e_i,& 3\leq i\leq n,  & \alpha\in \mathbb{C},\\[1mm]
 \end{array}\right.$$
$$R_4(M^{3,0}, 1):\left\{\begin{array}{lll}
[e_1,x]=e_1+e_3,&\\[1mm]
[e_{2},x]=2e_{2},&[e_i,x]=(i-2)e_i, &3\leq i\leq n,\\[1mm]
[x,e_1]=-e_1-e_3,&  [x,e_i]=(2-i)e_i,& 3\leq i\leq n.\\[1mm]
 \end{array}\right.$$
\end{thm}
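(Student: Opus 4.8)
The plan is to follow the Mubarakzyanov-type extension procedure for Leibniz algebras from \cite{Nulfilrad}: since the complement $Q=\langle x\rangle$ is one-dimensional, it contributes a single non-nilpotent derivation $d:=\mathcal{R}_{x|_{N}}$ of $N=M^{3,0}$, and the whole algebra is reconstructed from $d$ together with the left action $y\mapsto[x,y]$ and the square $[x,x]$. First I would determine $\Der(M^{3,0})$ explicitly: writing $d(e_i)=\sum_j a_{ji}e_j$ and imposing \eqref{eq0} on the defining products $[e_1,e_1]=e_2$, $[e_i,e_1]=e_{i+1}$ and $[e_1,e_i]=-e_{i+1}$ yields a linear system whose solution shows that the semisimple (diagonal) part of any derivation is governed by two free weights $\lambda_1=a_{11}$ and $\lambda_3=a_{33}$, with $a_{22}=2\lambda_1$ and $a_{ii}=\lambda_3+(i-3)\lambda_1$ for $i\geq3$, while the nilpotent part consists of the off-diagonal entries compatible with these weights. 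Non-nilpotency of $d$ forces not all of the $\lambda_i$ to vanish.

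Next I would normalize $d$. Replacing $x$ by $\mu x$ plus an element of $N$ rescales $d$ and shifts it by inner derivations, and composing with automorphisms of $M^{3,0}$ conjugates $d$; using this freedom I would reduce $d$ to a canonical form. The decisive dichotomy is whether $\lambda_1=0$. If $\lambda_1\neq0$ I would scale to $\lambda_1=1$: the generic diagonal derivation, in which every off-diagonal entry is removable by conjugation by an automorphism of the form $\mathrm{id}+tE_{31}$ and its analogues, produces the family $R_3(\alpha)$ with $\alpha=\lambda_3$; the resonance $\lambda_1=\lambda_3$, i.e. $\alpha=1$, makes the entry $e_1\mapsto e_3$ non-removable and produces $R_4$; and the value $\lambda_3=3-n$ makes the top weight $a_{nn}=\lambda_3+(n-3)\lambda_1$ vanish, so $e_n$ can absorb a nonzero square $[x,x]=e_n$, producing $R_1$. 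If $\lambda_1=0$ I would instead scale to $\lambda_3=1$; then $e_1,e_2$ lie in the kernel of the semisimple part while $e_3,\dots,e_n$ all share the weight $1$, so a large collection of off-diagonal nilpotent entries survives, producing the multiparameter family $R_2(\alpha_i,\beta_j)$.

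For each canonical $d$ I would then pin down the remaining data. A direct computation gives $\Ann_r(M^{3,0})=\langle e_2,e_n\rangle$, and this annihilator controls the left action: from $[e_i,x]+[x,e_i]\in\Ann_r(R)$ and $[x,x]\in\Ann_r(R)$ together with the representative Leibniz identities $\mathcal{L}(x,e_i,e_1)=0$, $\mathcal{L}(e_1,x,e_i)=0$ and $\mathcal{L}(x,x,e_i)=0$, I would show that $[x,e_i]=-[e_i,x]$ up to the annihilator directions and that $[x,x]$ is forced into $\langle e_2,e_n\rangle$, exactly as seen in the four families ($[x,x]=e_n$, $\alpha_4e_2$, $0$, $0$ respectively). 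Imposing the full identity $\mathcal{L}(\cdot,\cdot,\cdot)=0$ on the remaining triples then reduces the free constants to those displayed, and a final change of basis scales the first non-vanishing structure constant to $1$.

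Finally I would prove the four lists are pairwise non-isomorphic by exhibiting invariants: the multiset of eigenvalues of $\mathcal{R}_{x}$ distinguishes the weight patterns, in particular $\lambda_1=0$ for $R_2$ against $\lambda_1=1$ for $R_1,R_3,R_4$; the condition $[x,x]=0$ separates $R_1$ (where $[x,x]=e_n\neq0$) from $R_3$ at $\alpha=3-n$; and the Jordan form of $\mathcal{R}_{x}$, through the extra off-diagonal block, separates $R_4$ from $R_3(1)$. The hard part will be the normalization step: classifying the non-nilpotent derivations up to conjugation by $\operatorname{Aut}(M^{3,0})$ and scaling, because the many coincidences among the weights $\lambda_3+(i-3)\lambda_1$ force a careful analysis of exactly which off-diagonal entries can be conjugated away and which must be retained, and it is precisely this resonance bookkeeping that produces the split into the four families.
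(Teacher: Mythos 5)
First, a point of comparison: the paper itself contains no proof of this theorem --- it is imported verbatim from \cite{Shab} (the text only says the description ``is presented'' there). So the only in-paper benchmark is the proofs of the sibling results, above all Theorem \ref{thmmu1}, and those do follow your outline: compute $\Der(N)$, write the generic products $[e_i,x]$, $[x,e_i]$, $[x,x]$, impose the Leibniz identity on selected triples, and normalize by basis changes. Your weight computation for $\Der(M^{3,0})$ is correct (weights $\lambda_1$, $2\lambda_1$, $\lambda_3+(i-3)\lambda_1$), as is the assignment of $R_3$ to the generic case $\lambda_1\neq 0$, of $R_4$ to the resonance $\lambda_3=\lambda_1$, of $R_1$ to the vanishing top weight $\lambda_3=3-n$ with $[x,x]=e_n$, and of $R_2$ to $\lambda_1=0$.

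The genuine gap is your closing claim that ``it is precisely this resonance bookkeeping that produces the split into the four families,'' i.e.\ that classifying $d=\mathcal{R}_{x|_N}$ up to $\operatorname{Aut}(M^{3,0})$-conjugation, inner shifts and scaling yields exactly the four families. It does not: it yields a fifth orbit. At $\lambda_1=1$, $\lambda_3=2$ the entry $e_3\mapsto b_2e_2$ is resonant (both vectors have weight $2$); it is untouched by every inner derivation ($\mathcal{R}_{e_1}\colon e_1\mapsto e_2,\ e_i\mapsto e_{i+1}$, and $\mathcal{R}_{e_i}\colon e_1\mapsto -e_{i+1}$ have no $e_3\to e_2$ entry), and conjugating by the automorphism $e_3\mapsto e_3+se_2$ changes it by $(2\lambda_1-\lambda_3)s=0$; indeed $b_2\neq0$ creates a genuine Jordan block for the eigenvalue $2$, so no conjugation or shift removes it, only scales it to $1$. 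What kills this orbit is Leibniz-specific and involves $x$ twice: since $e_2=[e_1,e_1]$, the product $[x,e_2]=[[x,e_1],e_1]-[[x,e_1],e_1]=0$ is forced, while $[e_2,x]=2\lambda_1e_2\neq0$, so the left and right actions of $x$ fail to be negatives of each other exactly on $e_2$. Writing $[e_3,x]=\lambda_3e_3+b_2e_2$ and $[x,e_3]=-\lambda_3e_3+\gamma e_2$, the identity ${\mathcal L}(x,e_3,x)=0$ gives $(\lambda_3-2\lambda_1)\gamma+\lambda_3b_2=0$, which at $\lambda_3=2\lambda_1$ forces $b_2=0$ no matter how $\gamma$ and $[x,x]$ are chosen. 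So the Leibniz identities cannot be relegated to ``pinning down the remaining data'' after the derivation has been put in canonical form: they must also be allowed to discard whole conjugation-orbits of derivations that admit no Leibniz extension at all. Your plan, run in the stated order without this awareness, outputs five families, one of which does not exist; with that correction it reproduces the theorem.
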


\begin{thm} \label{thm32}An arbitrary algebra of the family $R(M^{3,0},2)$ is isomorphic to the following algebra:
$$R(M^{3,0}, 2):\left\{\begin{array}{lll}
[e_1,x_1]=e_1,&&\\[1mm]
[e_2,x_1]=2e_2,& [e_i,x_1]=(i-3)e_i,& 3\leq i\leq n,\\[1mm]
[x_1,e_1]=-e_1,& [x_1,e_i]=(3-i)e_i,& 3\leq i\leq n,\\[1mm]
[e_1,x_2]=e_1,&&\\[1mm]
 [e_2,x_2]=2e_2,& [e_i,x_2]=(i-2)e_i,& 3\leq i\leq n,\\[1mm]
[x_2,e_1]=-e_1,& [x_2,e_i]=(2-i)e_i,& 3\leq i\leq n,\\[1mm]
 \end{array}\right.$$
\end{thm}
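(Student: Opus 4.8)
The plan is to write $R(M^{3,0},2)$ as a vector space direct sum $R=N\oplus Q$ with $N=M^{3,0}$ and $Q=\langle x_1,x_2\rangle$, and to exploit that $\mathcal{R}_{x_1|_N}$ and $\mathcal{R}_{x_2|_N}$ are two nil-independent non-nilpotent derivations of $N$. The first step is to determine the relevant part of $\Der(M^{3,0})$. Since $N$ is generated by $e_1$ and $e_3$ (as $N^2=\langle e_2,e_4,\dots,e_n\rangle$), a derivation is fixed by its values on these two elements; solving \eqref{eq0} against the table of $M^{3,0}$ should show that the diagonal derivations form a two-parameter family $d_{(\lambda_1,\lambda_3)}$ with $d(e_1)=\lambda_1 e_1$, $d(e_2)=2\lambda_1 e_2$, and $d(e_j)=\bigl(\lambda_3+(j-3)\lambda_1\bigr)e_j$ for $3\le j\le n$. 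In particular the maximal torus of $\Der(M^{3,0})$ is $2$-dimensional, which is exactly why $s=2$ is the extremal case: the number of nil-independent derivations cannot exceed the torus dimension.

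Secondly, I would apply the standard reductions of Mubarakzyanov's method: absorb the nilpotent parts of $\mathcal{R}_{x_i|_N}$ by translating the $x_i$ along $N$, and then use the action of $\GL_2(\mathbb{C})$ on $Q$ together with $\mathrm{Aut}(N)$ to normalise the pair of semisimple parts. Nil-independence forces these semisimple parts to be linearly independent, hence to span the whole torus; after normalisation this produces precisely the right multiplications $[e_1,x_1]=e_1$, $[e_2,x_1]=2e_2$, $[e_i,x_1]=(i-3)e_i$ and $[e_1,x_2]=e_1$, $[e_2,x_2]=2e_2$, $[e_i,x_2]=(i-2)e_i$ recorded in the statement (any two bases of the torus differ by an element of $\GL_2(\mathbb{C})$, so the displayed representative is reached).

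Thirdly, the left multiplications $[x_i,e_j]$ and the products $[x_i,x_j]$ must be pinned down. Here I would use three ingredients: that $[a,b]+[b,a]\in\Ann_r(R)$ for all $a,b\in R$; that a direct computation gives $\Ann_r(N)=\langle e_2,e_n\rangle$; and that, $R$ being solvable with nilradical $N$, one has $[R,R]\subseteq N$, whence $[x_i,x_j]\in N$. The antisymmetrisation relation gives $[x_i,e_j]\equiv-[e_j,x_i]\pmod{\Ann_r(R)}$, and repeated use of the Leibniz identity $\mathcal{L}(x_i,-,-)=0$ on elements written as brackets propagates and rigidifies these equalities: for example $[x_i,e_2]=[x_i,[e_1,e_1]]=\mathcal{L}(x_i,e_1,e_1)=0$, while $\mathcal{L}(x_i,e_j,e_1)=0$ expresses $[x_i,e_{j+1}]$ through $[x_i,e_j]$ and $[x_i,e_1]$ and thereby kills the undetermined $\Ann_r(R)$-corrections rung by rung. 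Running the analogous bookkeeping on $\mathcal{L}(x_i,x_j,-)$, $\mathcal{L}(e_k,x_i,x_j)$ and the antisymmetric parts should force $[x_i,x_j]=0$ for all $i,j$; it is worth noting that the term $[x,x]=e_n$ surviving in the one-dimensional family $R_1(M^{3,0},1)$ is eliminated here precisely because the second independent diagonal action removes the freedom that sustained it.

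Finally I would verify that no free parameter remains --- that the $e_2$- and $e_n$-coefficients appearing as $\Ann_r(R)$-corrections, and any residual off-diagonal entries, are either forced to vanish or can be scaled away by the remaining automorphisms --- and then check that the resulting multiplication indeed satisfies the Leibniz identity, so the single algebra displayed does exist. I expect the main obstacle to lie exactly in this rigidity step. Unlike the case $s=1$, where several parameters persist into the families $R_1$--$R_4$, the presence of two independent diagonal actions heavily over-determines the Leibniz identities, and the delicate work is not to exhibit the canonical form but to confirm that the accumulated constraints collapse every parameter, leaving no isomorphism class beyond the one stated.
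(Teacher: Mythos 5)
The paper itself contains no proof of this statement: Theorems \ref{thm31} and \ref{thm32} are quoted from \cite{Shab}, so your proposal can only be compared with the proofs the paper gives for its own rank-two extensions, $R(M^{1,0},2)$ and $R(M^{2,\lambda},2)$. Those proofs share your starting point (exactly two nil-independent derivations, read off from the diagonal parameters of $\Der(N)$), but then take a shortcut you do not use: $\mathrm{span}\{e_1,\dots,e_n,x_1\}$ is itself a solvable Leibniz algebra with nilradical $N$ and one-dimensional complement, hence isomorphic to an algebra in the already established $s=1$ list, and compatibility with a second nil-independent derivation singles out the purely diagonal representative (there $R_7(M^{1,0},1)(0)$; in your situation it would be $R_3(M^{3,0},1)(0)$ of Theorem \ref{thm31}, the $x_2$-subalgebra being $R_3(M^{3,0},1)(1)$). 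This hands over the entire $x_1$-action, left products included, and leaves only the $x_2$-products to the Leibniz identities. Your plan instead normalizes both semisimple parts at once through the two-dimensional torus of $\Der(M^{3,0})$; your torus computation, the identification of the generators $e_1,e_3$, the equality $\Ann_r(M^{3,0})=\langle e_2,e_n\rangle$, and $[R,R]\subseteq N$ are all correct, and this route does explain transparently why $s=2$ is extremal, but it redoes normalization work that Theorem \ref{thm31} already encodes.

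Two of your normalization claims are wrong as stated, although both are rescued by the ``bookkeeping'' you defer to the end. First, the nilpotent part of $\mathcal{R}_{x_i|_N}$ cannot be absorbed by translating $x_i$ along $N$: such a translation changes $\mathcal{R}_{x_i|_N}$ only by inner derivations $\mathcal{R}_{a|_N}$, $a\in N$, and for $M^{3,0}$ these form only the $(n-2)$-dimensional space spanned by $\mathcal{R}_{e_1}$ together with the rank-one maps $e_1\mapsto -e_{j+1}$, $3\le j\le n-1$; for instance the nilpotent derivation sending $e_1\mapsto e_2$ and every other basis vector to zero is not inner, so it cannot be removed this way. Second, simultaneous diagonalization of the two semisimple parts is not automatic, because $\mathcal{R}_{x_1|_N}$ and $\mathcal{R}_{x_2|_N}$ commute only modulo the inner derivation $\mathcal{R}_{[x_2,x_1]|_N}$. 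In the paper's analogous proofs both difficulties dissolve inside the explicit identity computations: once the $x_1$-action is diagonal (which the $s=1$ theorem provides for free), identities of the type $\mathcal{L}(e_k,x_2,x_1)=0$ and $\mathcal{L}(x_2,e_k,x_1)=0$ annihilate every off-diagonal entry of the $x_2$-action (those entries carry nonzero weights for the diagonal $x_1$-action), and a final translation of $x_2$ inside $N$ kills the products $[x_i,x_j]$. So your outline does lead to the single algebra in the statement, but the real burden of proof sits in those identities; the step in which you claim to reach the diagonal form by translation plus the actions of $\GL_2(\mathbb{C})$ and $\mathrm{Aut}(N)$ alone would not go through literally.
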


In order to simplify our further calculations for the algebras $M^{1,\delta}$ and $M^{3,1}$, by taking the change of basis in the following form, respectively:
\[\begin{array}{lllll}
e_1^\prime=e_1, & e_2^\prime=e_{n-1}, & e_{i}^\prime=e_{i-1},  & 3\leq i\leq n-1,&e_{n}^\prime=e_{n},\\[1mm]
e_1^\prime=e_1, & e_{6}^\prime=e_{2}, & e_{i}^\prime=e_{i+1},  & 2\leq i\leq 5,&\\[1mm]
 \end{array}\]
we obtain the table of multiplication of the algebras $M^{1,\delta}$ and $M^{3,\alpha}$, which we   use throughout the paper:
\[M^{1,\delta}:\left\{\begin{array}{ll}
[e_1, e_1] =e_{n}, & \\[1mm]
[e_i,e_1]=e_{i+1}, & 2\leq i\leq n-2,\\[1mm]
[e_i,e_{2}]=\delta e_{i+3}, & 2\leq i\leq n-4,\ \delta\in\{0,1\},\\[1mm]
 \end{array}\right.\quad M^{3,1}:\left\{\begin{array}{ll}
[e_1, e_1] =e_{6}, &\\[1mm]
[e_i,e_1]=e_{i+1}, & 2\leq i\leq 4,\\[1mm]
[e_1,e_i]=-e_{i+1}, & 2\leq i\leq 4,\\[1mm]
[e_2, e_{2}] =e_{5}. & \\[1mm]
 \end{array}\right.\]

\section{Solvable Leibniz algebras whose nilradical is quasi-filiform non Lie Leibniz algebra of maximum length.}

This section is devoted to the classification of solvable Leibniz algebras whose nilradical is non Lie Leibniz algebra of maximum length. Due to Theorems \ref{thmM^4}, \ref{thm31} and \ref{thm32} we only need to consider solvable Leibniz algebras with nilradicals $M^{1,\delta}, M^{2,\lambda}$ and $M^{3,1}$.

\subsection{Derivations of algebras $M^{i,*}, i=1, 2, 3$}

\

In order to begin the description of solvable Leibniz algebras with nilradicals $M^{i,*}, i=1, 2, 3$ we need to know their derivations.

\begin{prop} \label{prop1} An arbitrary $d_1\in \Der(M^{1,\delta})$ (respectively, $d_2\in \Der(M^{2,\lambda})$) has the following matrix form:
$$d_1=\left(\begin{array}{ccccccccc}
a_1&0&0&0&\dots &0&a_{n-2}&a_{n-1}&a_{n}\\
0&b_2&b_3&b_4&\dots&b_{n-3}&b_{n-2}&b_{n-1}&b_{n}\\
0&0&a_1+b_2&b_3&\dots&b_{n-4}&b_{n-3}&b_{n-2}&0\\
0&0&0&2a_1+b_2&\dots&b_{n-5}&b_{n-4}&b_{n-3}&0\\
\vdots&\vdots& \ddots&\vdots &\vdots&\vdots&\vdots \\
0&0&0&0&\dots&0&0&(n-3)a_1+b_2&0\\
0&0&0&0&\dots&0&0&a_{n-2}&2a_1\\
\end{array}\right),$$
where if $\delta=1,$ then $b_2=3a_1;$
$$d_2=\left(\begin{array}{cccccccc}
a_1&a_2&\dots &a_{n-3}& a_{n-2}&a_{n-1}&a_{n}\\
0&2a_1&\dots&a_{n-4}&a_{n-3}&0&(\lambda+1)a_{n-1}\\
0&0&\dots&a_{n-5}&a_{n-4}&0&0\\
\vdots&\vdots& \ddots&\vdots &\vdots&\vdots&\vdots \\
0&0&\dots&0&(n-2)a_1&0&0\\
0&0&\dots&b_{n-3}&b_{n-2}&b_{n-1}&b_n\\
0&0&\dots&0&b_{n-3}&0&a_1+b_{n-1}\\
\end{array}\right),$$
where if $\lambda\neq0,$ then $b_{n-3}=0.$
\end{prop}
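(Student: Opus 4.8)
The plan is to use that each algebra is generated by two elements, so that a derivation is completely determined by its values on those two generators; the derivation identity \eqref{eq0} applied to the remaining structure relations then cuts the space of possible generator-images down to the displayed families. For $M^{1,\delta}$ the relations $[e_1,e_1]=e_n$ and $[e_i,e_1]=e_{i+1}$ ($2\le i\le n-2$) exhibit $e_3,\dots,e_{n-1}$ and $e_n$ as iterated right products of $e_1$ and $e_2$, so $M^{1,\delta}=\langle e_1,e_2\rangle$; similarly $M^{2,\lambda}=\langle e_1,e_{n-1}\rangle$, since $[e_i,e_1]=e_{i+1}$ produces $e_2,\dots,e_{n-2}$ and $e_n=[e_{n-1},e_1]$.

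Concretely, I would start from fully general images $d_1(e_1)=\sum_{k}\alpha_k e_k$, $d_1(e_2)=\sum_{k}\beta_k e_k$ (respectively $d_2(e_1)=\sum_k a_k e_k$, $d_2(e_{n-1})=\sum_k b_k e_k$) and propagate them along the chain relation via $d(e_{i+1})=[d(e_i),e_1]+[e_i,d(e_1)]$, which recursively determines $d$ on every basis vector in terms of the generator coefficients alone. The remaining relations --- crucially the vanishing products, e.g. $[e_1,e_k]=0$, $[e_i,e_j]=0$ in the relevant ranges, and $[e_{n-1},e_1]=0=[e_n,e_1]$ for $M^{1,\delta}$ --- are then imposed as consistency conditions $0=[d(e_i),e_j]+[e_i,d(e_j)]$. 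These force most generator coefficients to vanish (collapsing $d_1(e_1)$ into $\langle e_1,e_{n-2},e_{n-1},e_n\rangle$ and $d_1(e_2)$ into $\langle e_2,\dots,e_n\rangle$, and shaping $d_2$ likewise), leaving exactly the banded matrices of the statement. The diagonal entries record the action of the diagonal (grading) derivations: since for $i\ge 3$ the element $e_i$ is an iterated right product of one $e_2$ and $(i-2)$ copies of $e_1$, the diagonal part scales it by $b_2+(i-2)a_1$, while $e_n=[e_1,e_1]$ is scaled by $2a_1$; the couplings in the last columns come from the exceptional products $[e_1,e_1]=e_n$ and $[e_1,e_{n-1}]=\lambda e_n$.

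The main obstacle is less conceptual than organizational: assembling this large linear system and solving it without sign or index errors, and in particular isolating the single non-generic constraint present in each family. For $M^{1,\delta}$ one must still differentiate $[e_i,e_2]=\delta e_{i+3}$; for $\delta=0$ this is vacuous, but for $\delta=1$ comparing both sides against the already-propagated values forces the two diagonal charges to be compatible, namely $b_2=3a_1$. Symmetrically, for $M^{2,\lambda}$ differentiating $[e_1,e_{n-1}]=\lambda e_n$ and matching it with $d_2(e_n)=[d_2(e_{n-1}),e_1]+[e_{n-1},d_2(e_1)]$ kills the off-diagonal parameter $b_{n-3}$ precisely when $\lambda\neq0$. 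A final point of care is the small-$n$ edge cases, where several of the index ranges above degenerate and the affected off-diagonal parameters must be checked separately; once all these compatibility equations are resolved, the surviving free parameters are exactly those displayed, yielding the two matrix forms.
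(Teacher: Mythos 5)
Your proposal is correct and follows essentially the same route as the paper's proof: exploit that $M^{1,\delta}=\langle e_1,e_2\rangle$ (resp.\ $M^{2,\lambda}=\langle e_1,e_{n-1}\rangle$), propagate $d$ along $d(e_{i+1})=[d(e_i),e_1]+[e_i,d(e_1)]$, and impose the null products (the paper uses $d([e_n,e_1])=0$ to kill $a_2,\dots,a_{n-3}$) together with $d([e_2,e_2])=\delta d(e_5)$ to force $b_1=0$ and $\delta b_2=3\delta a_1$, exactly as you describe. Your treatment of the $M^{2,\lambda}$ case via $[e_1,e_{n-1}]=\lambda e_n$ matches what the paper leaves as ``similar to the above.''
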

\begin{proof} It is easy to see that $\{e_1,e_2\}$ are the generator basis elements of the algebra $M^{1,\delta}$.

We put
\[ d_1(e_1)=\sum\limits_{t=1}^{n}a_te_t, \qquad d_1(e_{2})=\sum\limits_{t=1}^{n}b_te_t.\]

From the derivation property \eqref{eq0} we have
\[
d_1(e_n)=d_1([e_1,e_1])=[d_1(e_1),e_1]+[e_1,d_1(e_1)]=2a_1e_n+\sum\limits_{t=3}^{n-1}a_{t-1}e_{t}.
\]

Consider
\[ 0=d_1(e_n,e_1])=[d_1(e_n),e_1]+[e_n,d_1(e_1)]=\sum\limits_{t=4}^{n-1}a_{t-2}e_{t}.\]
Consequently,
\[ a_{t}=0,\quad  2 \leq t \leq n-3.\]

From the derivation property \eqref{eq0} we have
\[d_1(e_3)=d_1([e_2,e_1])=[d_1(e_2),e_1]+[e_2,d_1(e_1)]=(a_1+b_2)e_3+\sum\limits_{t=4}^{n-1}b_{t-1}e_{t}+b_1e_n.\]

Buy induction and the property of derivation \eqref{eq0} we derive
\[d_1(e_i)=((i-2)a_1+b_2)e_i+ \sum\limits_{t=i+1}^{n-1}b_{t-i+2}e_t, \quad 4 \leq i \leq n-1.\]

Consider
$d_1(\delta e_5)=d_1([e_2,e_2])=[d_1(e_2),e_2]+[e_2,d_1(e_2)]=\sum\limits_{t=5}^{n-1}\delta b_{t-3}e_t+b_1e_3+\delta b_2e_5.$
On the other hand, we have
\[d_1(\delta e_5)=(3a_1+b_2)\delta e_5+\sum\limits_{t=6}^{n-1}\delta b_{t-3}e_t.\]
Consequently, $b_1=0, \ \delta b_2=3\delta a_1.$

The description of the matrix form of derivations for the algebra $M^{2,\lambda}$ is obtained similar to the above.
\end{proof}

\begin{prop} \label{prop2}  Any derivation $d$ of the algebra $M^{3,1}$ has the following matrix form:
\[d=\left(\begin{array}{ccccccc}
a_1&0&a_{3}& a_{4}&a_{5}&a_{6}\\
0&3a_1&b_{3}&b_{4}&b_{5}&b_{6}\\
0&0&4a_{1}&b_{3}&b_{4}&0\\
0&0&0&5a_{1}&b_{3}&0\\
0&0&0&0&6a_1&0\\
0&0&0&0&0&2a_1\\
\end{array}\right).\]
\end{prop}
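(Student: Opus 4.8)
The plan is to compute the derivation directly from the multiplication table of $M^{3,1}$, proceeding exactly as in the proof of Proposition \ref{prop1}. First I would identify a generating set: inspecting the table, the nonzero products all arise from left or right multiplication by $e_1$ together with $[e_2,e_2]=e_5$, so $\{e_1,e_2\}$ generate the algebra, and a derivation $d$ is completely determined by $d(e_1)$ and $d(e_2)$. Accordingly I would set $d(e_1)=\sum_{t=1}^{6}a_t e_t$ and $d(e_2)=\sum_{t=1}^{6}b_t e_t$ and then force $d$ to respect every structural relation, reading off the entries of the matrix column by column.

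The key steps, in order, are the following. Applying \eqref{eq0} to $e_6=[e_1,e_1]$ gives $d(e_6)=2a_1 e_6+\sum_{t=3}^{5}a_{t-1}e_{t+1}$ (using that right and left multiplication by $e_1$ shift indices), which pins down the last column and the scalar $2a_1$ in the $(6,6)$ entry. Next, applying \eqref{eq0} to the relations $e_{i+1}=[e_i,e_1]$ for $2\le i\le4$ propagates $d(e_2)$ up the chain $e_3,e_4,e_5$; combined with the left-multiplication relations $[e_1,e_i]=-e_{i+1}$ these determine the strictly upper-triangular band structure and the eigenvalue pattern $4a_1,5a_1,6a_1$ along the diagonal in the $e_3,e_4,e_5$ rows. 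The relation $e_5=[e_2,e_2]$ is then used to compute $d(e_5)$ a second way; comparing the two expressions forces the crucial constraints, in particular $b_2=3a_1$ and the vanishing of several $a_t$ and $b_t$ (those that would otherwise violate compatibility), yielding the zeros in the displayed matrix. Throughout I would also feed in the annihilator-type relations $[e_6,e_1]=0$, $[e_1,e_6]=0$, etc., exactly as the auxiliary computation $d([e_n,e_1])=0$ was used in Proposition \ref{prop1}, to kill the remaining off-diagonal entries.

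The main obstacle I anticipate is bookkeeping rather than conceptual: because $M^{3,1}$ has \emph{both} nontrivial left and right actions of $e_1$ (unlike $M^{1,\delta}$, where left multiplication by $e_1$ is essentially trivial) and an extra square relation $[e_2,e_2]=e_5$, one must impose two families of shift relations simultaneously and reconcile them. The delicate point is the interaction at $e_5$: it is reached both as $[e_4,e_1]$ (hence via the $e_1$-chain from $e_2$) and as $[e_2,e_2]$, so the two induced values of $d(e_5)$ must agree, and it is precisely this overdetermination that produces the rigidity $b_2=3a_1$ and forces the many zero entries (e.g.\ the $(2,2)$ entry being $3a_1$ rather than free, and the absence of an $a_2$ column contribution). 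Once these compatibility equations are solved, the remaining verification that the resulting linear map indeed satisfies \eqref{eq0} on all basis pairs is routine and can be checked mechanically, so I would state it as a direct computation.
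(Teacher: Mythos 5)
Your plan is the same as the paper's proof in approach: the paper's entire argument for this proposition is the single sentence that it follows by straightforward calculation from the derivation property \eqref{eq0} and the multiplication table, and your scheme (take the generators $e_1,e_2$, write $d(e_1)=\sum a_te_t$, $d(e_2)=\sum b_te_t$, propagate along the chain $e_{i+1}=[e_i,e_1]$, and exploit the overdetermination at $e_5$) is exactly that calculation; carried out correctly it does produce the stated matrix, with $b_2=3a_1$ coming from comparing $d(e_5)=(3a_1+b_2)e_5$ (via the chain) with $d(e_5)=2b_2e_5$ (via $[e_2,e_2]=e_5$).

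However, two of your sketched intermediate steps are miscomputed, and one of them, taken literally, contradicts the proposition you are proving. First, the formula $d(e_6)=2a_1e_6+\sum_{t=3}^{5}a_{t-1}e_{t+1}$ is transplanted from the $M^{1,\delta}$ computation and is false for $M^{3,1}$: since here $[e_t,e_1]=e_{t+1}$ \emph{and} $[e_1,e_t]=-e_{t+1}$ for $2\le t\le 4$, the shift terms in $d([e_1,e_1])=[d(e_1),e_1]+[e_1,d(e_1)]$ cancel pairwise and $d(e_6)=2a_1e_6$ exactly. This cancellation is essential: the parameters $a_3,a_4$ remain free (they appear in the first row of the matrix), so if your formula were right, row $6$ would acquire the entries $a_3,a_4$ and the matrix in the statement would be wrong. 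Second, you attribute the vanishing of $a_2$ to the overdetermination at $e_5$, but that overdetermination only yields $b_2=3a_1$; the constraints $a_2=0$ and $b_1=0$ come instead from computing $d(e_3)$ in two ways, via $[e_2,e_1]=e_3$ and via $[e_1,e_2]=-e_3$, which give $e_5$-coefficients $b_4+a_2$ versus $b_4-a_2$ and $e_6$-coefficients $b_1$ versus $-b_1$. (Your general directive to ``impose both families of shift relations and reconcile them'' does cover this, so the plan survives, but the bookkeeping must be redone.) Finally, note that the annihilator-type relations $[e_6,e_1]=0$, $[e_1,e_6]=0$ impose no constraints here, unlike in Proposition \ref{prop1}: nothing kills $a_3,\dots,a_6$, which is precisely why they survive as free parameters in the first row.
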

\begin{proof}
The proof is established by straightforward calculations of the derivation property and the table of multiplications of the algebra $M^{3,1}$.
\end{proof}

Propositions \ref{prop1} and \ref{prop2} imply the possible dimensions for complemented spaces.

\begin{cor}\label{cor111}

The following holds:

\begin{itemize}
    \item If $R(M^{1,0},s)=M^{1,0}\oplus Q$, then $s\leq2$;
    \item if $R(M^{1,1},s)=M^{1,1}\oplus Q$, then $s=1$;
    \item if $R(M^{2,\lambda},s)=M^{2,\lambda}\oplus Q$, then $s\leq2$;
    \item if $R(M^{3,1},s)=M^{3,1}\oplus Q$, then $s=1$.
\end{itemize}

\end{cor}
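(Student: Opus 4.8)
The plan is to invoke the bound recalled in the Preliminaries: the dimension $s=\dim Q$ of the complementary space to the nilradical cannot exceed the maximal number of nil-independent derivations of $N$ (see \cite[Theorem 3.2]{Nulfilrad}). So the whole corollary reduces to counting, for each of the four nilradicals, how many nil-independent derivations it admits, and the matrix forms in Propositions \ref{prop1} and \ref{prop2} make this count transparent.

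The key observation I would use is that every derivation in those two propositions is upper-triangular with respect to the fixed basis $\{e_1,\dots,e_n\}$. Hence any linear combination of such derivations is again upper-triangular, and it is nilpotent exactly when its diagonal vanishes. Consequently the maximal number of nil-independent derivations equals the dimension of the space of diagonals swept out as the free parameters range over $\mathbb{C}$, and the proof becomes a short inventory of the diagonal entries.

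For $d_1\in\Der(M^{1,\delta})$ the diagonal reads $a_1,\, b_2,\, a_1+b_2,\, 2a_1+b_2,\,\dots,\,(n-3)a_1+b_2,\, 2a_1$, an affine-linear function of the two parameters $a_1,b_2$. When $\delta=0$ these are unconstrained, the diagonal space is $2$-dimensional, so $s\le 2$. When $\delta=1$ the constraint $b_2=3a_1$ from Proposition \ref{prop1} forces every diagonal entry to be a multiple of $a_1$; the diagonal space drops to dimension $1$, giving $s\le 1$, and since we work with non-nilpotent solvable algebras we have $s\ge 1$, whence $s=1$. The same reading for $d_2\in\Der(M^{2,\lambda})$ gives diagonal entries $a_1,2a_1,\dots,(n-2)a_1,\, b_{n-1},\, a_1+b_{n-1}$, governed by the two independent parameters $a_1$ and $b_{n-1}$, so the diagonal space is $2$-dimensional and $s\le 2$. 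Finally, for $d\in\Der(M^{3,1})$ Proposition \ref{prop2} shows the diagonal to be $a_1,3a_1,4a_1,5a_1,6a_1,2a_1$, all multiples of the single parameter $a_1$; the diagonal space has dimension $1$, so $s\le 1$ and, combined with $s\ge 1$, $s=1$.

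I do not expect a genuine obstacle here. The only point needing care is the reduction from \emph{nil-independent derivations} to \emph{linearly independent diagonals}, which rests on the simultaneous upper-triangularity already delivered by the explicit matrix forms; once this is in place the statement is just the bookkeeping of the free diagonal parameters in each family.
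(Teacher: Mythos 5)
Your proposal is correct and is essentially the paper's own argument: the paper's proof of this corollary is precisely the observation that the derivation matrices of Propositions \ref{prop1} and \ref{prop2} carry at most two (respectively one) free diagonal parameters, combined with the nil-independence bound from \cite[Theorem 3.2]{Nulfilrad} (this is made explicit later in the paper, where it is said that ``since parameters $a_1$ and $b_2$ are in the diagonal, we have only two nil-independent derivations''). One small inaccuracy to repair: the matrices are \emph{not} literally upper-triangular in the basis order $\{e_1,\dots,e_n\}$ as displayed --- e.g.\ $d_1$ has the entry $a_{n-2}$ in position $(n,n-1)$, and $d_2$ has the entries $b_{n-3}, b_{n-2}$ in its last two rows below the diagonal --- but they become simultaneously upper-triangular after a reordering of the basis (for $M^{1,\delta}$ put $e_n$ before $e_{n-1}$; for $M^{2,\lambda}$ use the order $e_1,e_{n-1},e_2,\dots,e_{n-3},e_n,e_{n-2}$), and since a permutation only permutes diagonal entries, your identification ``nilpotent $\Leftrightarrow$ the listed diagonal entries vanish'' and the resulting parameter count go through unchanged.
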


\subsection{Descriptions of algebras $R(M^{i,*}, 1), \ i=1, 2, 3$}

\

\

In this subsection we describe solvable Leibniz algebras $R(M^{i,*}, 1)=M^{i,*}\oplus Q, \ i=1,2,3$.

 \begin{thm} \label{thmmu1} An arbitrary algebra of the family $R(M^{1,0}, 1)$ admits a basis $\{e_1, e_2, \dots, e_{n},x\}$ such that its table of multiplications is one of the following types:
$$\begin{array}{l}
R_1(M^{1,0},1)(\alpha_2,\dots, \alpha_{n}):\\[1mm]
\begin{cases}
[e_1,x]=\alpha_2e_n,&  [x,x]=\alpha_n e_{n},\\[1mm]
[e_i,x]=e_i+\sum\limits_{t=i+1}^{n-1}\alpha_{t-i+2}e_t,& 2\leq i\leq n-1,\\[1mm]
\end{cases}\end{array}\quad
\begin{array}{l}
R_2(M^{1,0},1)(\alpha):\\[1mm]
\begin{cases}
[e_1,x]=e_1,\\[1mm]
[e_i,x]=(i-1)e_i,& 2\leq i\leq n-1,\\[1mm]
[e_n,x]=2e_{n}, & [x,e_1]=-e_1+\alpha e_2,\\[1mm]
\end{cases}\end{array}$$

$$\begin{array}{l}
R_3(M^{1,0},1)(\alpha):\\[1mm]
\begin{cases}
[e_1,x]=e_1,& [e_2,x]=2e_2+ \alpha e_n,\\[1mm]
[e_i,x]=ie_i,& 3\leq i\leq n-1,\\[1mm]
[e_n,x]=2e_n,& [x,e_1]=-e_1,\\[1mm]
\end{cases}\end{array}\quad\quad\quad\quad\quad
\begin{array}{l}
R_4(M^{1,0},1)(\alpha):\\[1mm]
\begin{cases}
[e_1,x]=e_1+\alpha e_{n-2},& [x,x]=-\alpha e_{n-3},\\[1mm]
[e_i,x]=(i+3-n)e_i,& 2\leq i\leq n-1,\\[1mm]
[e_n,x]=\alpha e_{n-1}+2e_{n},& [x,e_1]=-e_1,\\[1mm]
\end{cases}\end{array}$$

$$\begin{array}{l}
R_5(M^{1,0},1)(\alpha):\\[1mm]
\begin{cases}
[e_1,x]=e_1+\alpha e_{n-1}, \\[1mm]
[e_i,x]=(i+2-n)e_i,& 2\leq i\leq n,\\[1mm]
[x,e_1]=-e_1,&  [x,x]=-\alpha e_{n-2},\\[1mm]
\end{cases}\end{array}\quad
\begin{array}{l}
R_6(M^{1,0},1)(\alpha):\\[1mm]
\begin{cases}
[e_1,x]=e_1,&[x,e_1]=-e_1,\\[1mm]
[e_i,x]=(i+1-n)e_i,& 2\leq i\leq n-1,\\[1mm]
[e_n,x]=2e_n,& [x,x]=\alpha e_{n-1},\\[1mm]
\end{cases}\end{array}$$

$$\begin{array}{l}
R_7(M^{1,0},1)(\alpha):\\[1mm]
\begin{cases}
[e_1,x]=e_1,& [x,e_1]=-e_1, \\[1mm]
[e_i,x]=(i-2+\alpha)e_i,& 2\leq i\leq n-1,\\[1mm]
[e_n,x]=2e_n,& \alpha\notin \{1,2,3-n,4-n,5-n\}.\\[1mm]
\end{cases}\end{array}\quad\quad\quad\quad\quad\quad\quad\quad\quad\quad\quad\quad\quad\quad$$
\end{thm}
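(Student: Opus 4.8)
The plan is to build the solvable extension by the Mubarakzyanov-type procedure adapted to Leibniz algebras \cite{Nulfilrad}. By Corollary \ref{cor111} the complement $Q$ is one-dimensional, so I fix a basis $\{e_1,\dots,e_n,x\}$ with $Q=\langle x\rangle$. Since $\mathcal{R}_{x|_N}$ is a non-nilpotent derivation of $N=M^{1,0}$, Proposition \ref{prop1} (taken with $\delta=0$) lets me set
$$[e_i,x]=d_1(e_i),\qquad 1\le i\le n,$$
introducing the parameters $a_1,a_{n-2},a_{n-1},a_n,b_2,\dots,b_n$. The diagonal of $d_1$ assigns the weight $a_1$ to $e_1$, the weight $(i-2)a_1+b_2$ to $e_i$ $(2\le i\le n-1)$ and the weight $2a_1$ to $e_n$; non-nilpotency forces $(a_1,b_2)\neq(0,0)$.

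Next I recover the remaining products $[x,e_i]$ and $[x,x]$. Two ingredients control them. First, $[x,y]+[y,x]\in\Ann_r(R)$ and $[x,x]\in\Ann_r(R)$; since $e_1\notin\Ann_r(R)$ (because $[e_1,e_1]=e_n\neq0$) while $e_2,\dots,e_n$ span $\Ann_r(N)$, this forces the $e_1$-component of $[x,e_1]$ to be $-a_1e_1$ and places $[x,e_i]$ $(i\ge2)$ and $[x,x]$ inside $\langle e_2,\dots,e_n\rangle$. Second, the Leibniz identities evaluated on $x$ and the chain generator $e_1$---most usefully $\mathcal{L}(x,e_i,e_1)=0$, $\mathcal{L}(e_i,x,e_1)=0$ and $\mathcal{L}(x,x,e_1)=0$---propagate these products along the chain $e_2\to e_3\to\cdots$ produced by $\mathcal{R}_{e_1}$, the only non-zero right multiplication inside $N$, and express every left product and the square through the parameters already present.

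Then I normalize, using the automorphisms of $N$ (which conjugate $d_1$ and hence act on the off-diagonal parameters $b_3,\dots,b_n,a_{n-2},a_{n-1},a_n$), the rescaling $x\mapsto\lambda x$, and the shift $x\mapsto x+v$, $v\in N$ (which adds the inner derivation $\mathcal{R}_v$, a scalar multiple of $\mathcal{R}_{e_1}$). The first dichotomy is $a_1=0$ versus $a_1\neq0$. If $a_1=0$, then $e_2,\dots,e_{n-1}$ all carry the same weight $b_2$, which I scale to $1$; this single large resonance block obstructs the elimination of the super-diagonal entries and leaves the multiparameter family $R_1$. If $a_1\neq0$, scaled to $1$, the weights of $e_2,\dots,e_{n-1}$ form the arithmetic progression $(i-2)+b_2$, while $e_1$, $e_n$ and any surviving square $[x,x]$ carry the weights $1$, $2$ and $0$ respectively; a correction term can survive only when one of $1,2,0$ is matched by a weight $(i-2)+b_2$ in a slot permitted by the sparse shape of $d_1$ and by $\Ann_r$, and only when the relevant Leibniz identity through $e_1$ does not annihilate it. These constraints single out exactly the critical values $b_2\in\{1,2,3-n,4-n,5-n\}$; for every other $b_2$ all corrections are cleared and one obtains the generic family $R_7$, whose parameter is $\alpha=b_2$ itself.

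The delicate point---and the step I expect to be the main obstacle---is to decide, for each critical value, precisely which correction survives and in which product it sits, since the weight count alone is misleading. A weight-$0$ square $[x,x]=\alpha e_i$ survives the shift $x\mapsto x+v$ only when $e_i$ lies at the end of the chain, so that $[e_i,e_1]=0$ and $\mathcal{L}(x,x,e_1)=0$ imposes no further relation; this singles out $e_{n-1}$ and $b_2=3-n$, giving $R_6$. A weight-$1$ coincidence at $b_2=1$ survives in $[x,e_1]$, giving $R_2$, and the value $b_2=2$ activates the weight-$2$ resonance in $[e_2,x]$, giving $R_3$; the values $b_2\in\{4-n,5-n\}$ activate the weight-$1$ (and, for $5-n$, also weight-$2$) resonances admissible in $[e_1,x]$ and $[e_n,x]$, which then drag along, through the same Leibniz identities, a linked square term in $[x,x]$ of prescribed sign, producing $R_5$ and $R_4$. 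I would finish by verifying that every Leibniz identity holds on the resulting tables, that each surviving parameter can at most be rescaled to $1$, and that the seven families are pairwise non-isomorphic---most efficiently by comparing the eigenvalue multiset of $\mathcal{R}_{x|_N}$ together with the location of the surviving $\alpha$-term.
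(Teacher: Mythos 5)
Your proposal is correct and follows essentially the same route as the paper's proof: the same setup from Proposition \ref{prop1} together with the right-annihilator constraints, the same Leibniz-identity bookkeeping for the left products and $[x,x]$, and a normalization/case analysis whose critical values $b_2\in\{1,2,3-n,4-n,5-n\}$ and their attached families $R_2,\dots,R_6$ (plus $R_1$ for $a_1=0$ and the generic $R_7$) match the paper's cases exactly, the only difference being that the paper splits first on $b_2\neq 0$ rather than on $a_1$. One minor slip: Corollary \ref{cor111} only gives $s\le 2$ for $M^{1,0}$, so the one-dimensionality of $Q$ is the hypothesis of the theorem rather than a consequence of that corollary.
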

\begin{proof} Let $\{e_1, e_2, \dots, e_{n},x\}$ be a basis such that the table of multiplications of an algebra $M^{1,0}$ in the basis $\{e_1, e_2, \dots, e_{n}\}$ has the form of Theorem \ref{thmquasi-fil}.

From Proposition \ref{prop1} we have the products in the algebra $R(M^{1,0}, 1)$:
$$\begin{cases}
[e_1,x]=a_{1}e_1+a_{n-2}e_{n-2}+a_{n-1}e_{n-1}+a_{n}e_n, \\[1mm]
[e_2,x]=\sum\limits_{t=2}^{n}b_{t}e_t,\\[1mm]
[e_i,x]=((i-2)a_{1}+b_{2})e_i+\sum\limits_{t=i+1}^{n-1}b_{t-i+2}e_t,\ 3\leq i\leq n-1,\\[1mm] [e_n,x]=a_{n-2}e_{n-1}+2a_{1}e_{n},  \\[1mm]
[x,e_i]=\sum\limits_{t=1}^{n}c_{i,t}e_t,\ 1\leq i\leq n-1,\  [x,x]=\sum\limits_{t=1}^{n}\delta_{t}e_t,\\[1mm]
\end{cases}$$
where $a_1\neq 0$ or $b_2\neq 0$.

Since $[x,e_1]+[e_1,x],\ [x,x], \ e_n\in \Ann_r(R(M^{1,0}, 1)),$ we conclude $$c_{1,1}=-a_1,\ \delta_1=0,\ [x,e_n]=0.$$

Taking the change
$$x^\prime=x-\sum\limits_{t=3}^{n-1}c_{1,t}e_{t-1}-c_{1,n}e_1,$$
one can assume $c_{1,t}=0$ for $3\leq t\leq n.$

Considering the equalities ${\mathcal L}(x,e_1,e_2)={\mathcal L}(x,e_i,e_1)={\mathcal L}(x,e_2,x)={\mathcal L}(x,x,e_2)=0$ with $2 \leq i\leq n-2$ we derive restrictions:
$$c_{2,t}=0,\ \ 1\leq t\leq n-2, \quad c_{i,t}=0,\ \ 3\leq i\leq n-1,\ \ 1\leq t\leq n,$$
$$2a_1c_{2,n}=b_2c_{2,n}=b_2c_{2,n-1}=(n-3)a_1c_{2,n-1}+a_{n-2}c_{2,n}=0.$$

We have $c_{2,n}=0$ (otherwise we have $a_1=b_2=0$), then from the above restrictions we obtain $c_{2,n-1}=0.$

The equality ${\mathcal L}(x,e_{1},x)=0$ implies
\begin{equation}\label{eq2}\left\{\begin{array}{lll}
c_{1,2}(a_1-b_2)=0,&  \delta_t=c_{1,2}b_{t+1}, & 2\leq t\leq n-4,\\[1mm]
\delta_{n-3}=c_{1,2}b_{n-2}-a_1a_{n-2}, & \delta_{n-2}=c_{1,2}b_{n-1}-a_1a_{n-1},& c_{1,2}b_{n}=a_1a_{n}.\\[1mm]
\end{array}\right.\end{equation}

Thus, the table of multiplications of the algebra $ R(M^{1,0},1)$ has form:
\begin{equation}\label{eq111}
\begin{cases}
[e_1,x]=a_1e_1+a_{n-2}e_{n-2}+a_{n-1}e_{n-1}+a_{n}e_n,\ [e_2,x]=\sum\limits_{t=2}^{n}b_{t}e_t,\\[1mm]
[e_i,x]=((i-2)a_{1}+b_{2})e_i+\sum\limits_{t=i+1}^{n-1}b_{t-i+2}e_t,\ 3\leq i\leq n-1,\\[1mm]
[e_n,x]=a_{n-2}e_{n-1}+2a_1e_{n},\ [x,e_1]=-a_1e_1+c_{1,2}e_{2},\ [x,x]=\sum\limits_{t=2}^{n}\delta_{t}e_t.\\[1mm]
\end{cases}  \end{equation}

Let us take the general change of generator basis elements:
$$e_1^\prime=\sum\limits_{i=1}^{n}A_{i}e_i,\ \ \ e_2'=\sum\limits_{t=1}^{n}B_{t}e_t,\ \ \ x^\prime=Hx+\sum\limits_{t=1}^{n}C_{t}e_t.$$

We express the rest basis elements $e_i', 3\leq i \leq n$ from the products $[e_i', e_1']=e_{i+1}'$.

We consider the table of multiplications (\ref{eq111}) in terms of the basis $\{e_1', \dots, e_n', x'\}$ and with parameters $a_i', b_i', \delta_i'$. Then express left sides and right sides of the products with respect to basis $\{e_1, \dots, e_n, x\}$, we obtain the following relations between parameters
$a_i', b_i', \delta_i'$ and $a_i, b_i, \delta_i$:
$$a_1'=Ha_1,\ \ \ b_{2}'=Hb_2.$$

%

Consider the following possible cases.

{\bf Case 1.} Let $b_2\neq0$.

{\bf Case 1.1.} Let $a_1=0$. Then $b_2\neq 0$ and by choosing $H=\frac{1}{b_2}$ we can assume $b_2=1.$
From restrictions (\ref{eq2}) we have
\[c_{1,2}=\delta_{t}=0, \ 2\leq t\leq n-2.\]

Now applying the change of basis elements $e_1, e_2, e_n$ and $x$ as follows:
$$e_1^\prime=e_1-a_{n-2}e_{n-2}-(a_{n-1}-a_{n-2}b_3+a_{n}a_{n-2})e_{n-1},\ \ e_{2}'=e_2+b_{n}e_n,$$
$$e_{n}'=e_n-a_{n-2}e_{n-1},\ \ \ x'=x-\delta_{n-1}e_{n-1},$$
we can assume $a_{n-2}=a_{n-1}=b_n=\delta_{n-1}=0$.

Thus, we obtain the family of algebras $R_1(M^{1,0},1)(\alpha_2, \dots, \alpha_{n}).$

{\bf Case 1.2.} Let $a_1\neq0$. Then by rescaling $x^\prime=\frac{1}{a_1}x$ in (\ref{eq111}) we can assume $a_{1}=1$. Note that restrictions (\ref{eq2}) in this case have the following form:
\begin{equation}\label{eq3}\left\{\begin{array}{lll}
c_{1,2}(1-b_2)=0,& \delta_t=c_{1,2}b_{t+1},& 2\leq t\leq n-4,\\[1mm]
\delta_{n-3}=c_{1,2}b_{n-2}-a_{n-2}, & \delta_{n-2}=c_{1,2}b_{n-1}-a_{n-1},& c_{1,2}b_{n}=a_{n}.
\\[1mm]
\end{array}\right.\end{equation}

Now we investigate possible cases for parameter $b_2$.

\begin{itemize}
\item Let $b_2=1$. Then applying the basis transformation in the following form:
$$e_1^\prime=e_1, \ e_2^\prime=e_2+\sum\limits_{j=3}^{n}A_{j}e_{j}, \ e_i^\prime=e_i+\sum\limits_{j=i+1}^{n-1}A_{j-i+2}e_{j}, \ 3\leq i\leq n-1,$$ with
$$A_{3}=-b_{3},  \ A_{i}=-\frac{1}{i-2}\Big(b_{i}+\sum\limits_{j=3}^{i-1}b_{i-j+2}A_{j}\Big),\ 4\leq i\leq n-2,$$
$$A_{n-1}=-\frac{1}{n-3}\Big(b_{n-1}+\sum\limits_{j=3}^{n-2}b_{n+1-j}A_{j}-b_na_{n-2}\Big), \ A_{n}=-b_{n},$$
we obtain $b_{t}=0$ for $3\leq t\leq n.$

From restrictions (\ref{eq3}) we get
$$\delta_{n-3}=-a_{n-2},\ \delta_{n-2}=-a_{n-1}, \ a_{n}=\delta_t=0, \ 2\leq t\leq n-4,$$
consequently, $[x,e_1]=-e_1+\sum\limits_{t=2}^{n}c_{1,t}e_t.$

Taking the change
$$e_1^\prime=e_1-\frac{a_{n-2}}{n-4}e_{n-2}-\frac{a_{n-1}}{n-3}e_{n-1},\ \ e_n^\prime=e_n-\frac{a_{n-2}}{n-4}e_{n-1},$$
we obtain $a_{n-2}=a_{n-1}=0$.

The equality $\mathcal{L}(x,e_{1},x)=0$ implies $c_{1,t}=0, \ 3\leq t\leq n.$

Finally, making a change $x^\prime=x-\frac{\delta_{n-1}}{n-2}e_{n-1}-\frac{\delta_{n}}{2}e_n$, we derive $\delta_{n-1}=\delta_{n}=0$ and the algebra $R_2(M^{1,0},1)(\alpha)$ is obtained.

\item Let $b_2\neq1$. Then from (\ref{eq3}) we conclude $a_n=0,\ c_{1,2}=0,\ \delta_{t}=0,\ 2\leq t\leq n-4.$

The result of the change of the basis elements $\{e_1, \dots, e_{n-1}\}$:
$$e_1^\prime=e_1, \ e_i^\prime=e_i+\sum\limits_{j=i+1}^{n-1}A_{j-i+2}e_{j},\ 2\leq i\leq n-1,$$ with
$$A_{3}=-b_{3},\ A_{i}=-\frac{1}{i-2}\Big(b_{i}+\sum\limits_{j=3}^{i-1}b_{i-j+2}A_{j}\Big),\ 4\leq i\leq n-1,$$
is $b_{t}=0, \ 3\leq t\leq n-1.$

Taking $x^\prime=x-\frac{1}{2}\delta_{n}e_n$, we have
$\delta_{n}=0$ and the table of multiplications (\ref{eq111}) has the following form:
\begin{equation}\label{eq222}
\left\{\begin{array}{ll}
[e_1,x]=e_1+a_{n-2}e_{n-2}+a_{n-1}e_{n-1},&\\[1mm]
[e_2,x]=b_2e_2+b_{n}e_n,&\\[1mm]
[e_i,x]=(i-2+b_{2})e_i,&3\leq i\leq n-1,\\[1mm]
[e_n,x]=a_{n-2}e_{n-1}+2e_{n},&\\[1mm]
[x,e_1]=-e_1,&\\[1mm]
[x,x]=-a_{n-2}e_{n-3}-a_{n-1}e_{n-2}+\delta_{n-1}e_{n-1}.&\\[1mm]
\end{array}\right.
\end{equation}
\begin{itemize}
  \item  Let $b_2=2$. Then taking  the  change of elements $\{e_1, e_2, e_n, x\}$ in (\ref{eq222}) as follows:
$$e_1^\prime=e_1-\frac{a_{n-2}}{n-3}e_{n-2}-\frac{a_{n-1}}{n-2}e_{n-1},\ e_2^\prime=e_2-\frac{a_{n-2}b_n}{(3-n)^2}e_{n-1},  $$
$$e_n^\prime=e_n-\frac{a_{n-2}}{n-3}e_{n-1},\ x'=x-\frac{\delta_{n-1}}{n-1}e_{n-1}+\frac{a_{n-2}}{n-3}e_{n-3}+\frac{a_{n-1}}{n-2}e_{n-2},$$
we can assume that $a_{n-2}=a_{n-1}=\delta_{n-1}=0$. Hence, we obtain the family of algebras $R_3(M^{1,0},1)(\alpha).$

\item  Let $b_2=5-n$. Then setting
$$e_1^\prime=e_1-a_{n-1}e_{n-1},\ e_2^\prime=e_2+\frac{a_{n-2}b_n}{(3-n)^2}e_{n-1}+\frac{b_{n}}{3-n}e_{n},\ x^\prime=x-\frac{\delta_{n-1}}{2}e_{n-1}+a_{n-1}e_{n-2},$$
in (\ref{eq222}) one can get $a_{n-1}=b_n=\delta_{n-1}=0$. So, we obtain the family of algebras $R_4(M^{1,0},1)(\alpha).$

 \item Let $b_2=4-n$. Then putting
$$e_1^\prime=e_1+a_{n-2}e_{n-2},\ e_2^\prime=e_2+\frac{a_{n-2}b_n}{(2-n)(3-n)}e_{n-1}+\frac{b_n}{2-n}e_n,$$
$$e_n^\prime=e_n+a_{n-2}e_{n-1}, \ x'=x-\delta_{n-1}e_{n-1}-a_{n-2}e_{n-3},$$
in (\ref{eq222}) we derive $a_{n-2}=b_n=\delta_{n-1}=0$. Therefore, the family $R_5(M^{1,0},1)(\alpha)$ is obtained.

 \item Let $b_2=3-n$. Then applying the change in (\ref{eq222})
$$e_1^\prime=e_1+\frac{a_{n-2}}{2}e_{n-2}+a_{n-1}e_{n-1},\ e_2^\prime=e_2+\frac{a_{n-2}b_n}{(1-n)(3-n)}e_{n-1}+\frac{b_n}{1-n}e_n,$$
$$e_n^\prime=e_n+\frac{a_{n-2}}{2}e_{n-1},\ x^\prime=x-\frac{a_{n-2}}{2}e_{n-3}-a_{n-1}e_{n-2},$$
we can assume $a_{n-2}=a_{n-1}=b_n=0.$ Hence, we get $R_6(M^{1,0},1)(\alpha).$

 \item Let $b_2\neq 2,\ 3-n,\ 4-n,\ 5-n$. Then setting
$$e_1^\prime=e_1-\frac{a_{n-2}}{n-5+b_2}e_{n-2}-\frac{a_{n-1}}{n-4+b_2}e_{n-1},\ e_2^\prime=e_2+\frac{a_{n-2}b_n}{(b_2-2)(3-n)}e_{n-1}+\frac{b_n}{b_2-2}e_n,$$
$$e_n^\prime=e_n-\frac{a_{n-2}}{n-5+b_2}e_{n-1},\ x'=x-\frac{\delta_{n-1}}{n-3+b_2}e_{n-1}+\frac{a_{n-2}}{n-5+b_2}e_{n-3}+\frac{a_{n-1}}{n-4+b_2}e_{n-2},$$
in (\ref{eq222}) we get  $a_{n-2}=a_{n-1}=b_n=\delta_{n-1}=0.$ Thus, we obtain the algebra $R_7(M^{1,0},1)(\alpha).$
\end{itemize}
\end{itemize}

{\bf Case 2.} Let $b_2=0$. Then $a_1\neq 0$ and putting $H=\frac{1}{a_1}$ one can assume $a_1=1$.

Taking the change of basis elements $\{e_1, \dots, e_{n-1}\}$ as follows
$$e_1^\prime=e_1,\ e_2^\prime=e_2+\sum\limits_{j=3}^{n}A_{j}e_{j},\ e_i^\prime=e_i+\sum\limits_{j=i+1}^{n-1}A_{j-i+2}e_{j},\ 3\leq i\leq n-1,
$$
with
$$A_{3}=-b_{3}, \ \ \ A_{i}=-\frac{1}{i-2}\Big(b_{i}+\sum\limits_{j=3}^{i-1}b_{i-j+2}A_{j}\Big), \ \ 4\leq i\leq n-2,$$
$$A_{n-1}=-\frac{1}{n-3}\Big(b_{n-1}+\sum\limits_{j=3}^{n-2}b_{n+1-j}A_{j}+A_na_{n-2}\Big),\ \ \ A_{n}=-\frac{1}{2}b_{n},$$
we can assume $b_{t}=0, \ 3\leq t\leq n.$

Putting
\[e_1^\prime=e_1-\frac{a_{n-2}}{n-5}e_{n-2}-\frac{a_{n-1}-a_na_{n-2}}{n-4}e_{n-1}-a_{n}e_n,\ \ e_n^\prime=e_n-\frac{a_{n-2}}{n-5}e_{n-1},\]
we get $a_{n-2}=a_{n-1}=a_{n}=0$.

Now, setting $x^\prime=x-\sum\limits_{t=3}^{n-1}c_{1,t}e_{t-1}$, we get $c_{1,t}=0, \ 3\leq t\leq n-1$.

The equality $\mathcal{L}(x,e_{1},x)=0$ implies
$$c_{1,2}=0,\ \ c_{1,n}=0, \ \delta_t=0,\ \ 2\leq t\leq n-2.$$

Putting $x^\prime=x-\frac{\delta_{n-1}}{n-3}e_{n-1}-\frac{\delta_{n}}{2}e_n$ if necessary, we obtain
$\delta_{n-1}=0,\ \delta_{n}=0 $ and we get the algebra of the family $R_7(M^{1,0},1)(\alpha).$
\end{proof}

In the following theorem we investigate the isomorphism inside the families of algebras of Theorem \ref{thmmu1}.

\begin{thm} \label{thm35} An arbitrary algebra of the family $R(M^{1,0},1)$ is isomorphic to one of the following pairwise non-isomorphic algebras:
$$R_1(M^{1,0},1)(\alpha_2,\dots, \alpha_{n}),\ R_i(M^{1,0},1)(1), \ 2\leq i \leq 6, \ R_7(M^{1,0},1)(\alpha),$$
where $\alpha_i, \alpha\in \mathbb{C}$ and the first non-vanishing parameter $\{\alpha_2,\dots, \alpha_{n}\}$
in the algebra  $R_1(M^{1,0},1)(\alpha_2,\dots, \alpha_{n})$ can be scaled to $1$.
\end{thm}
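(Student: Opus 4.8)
The plan is to show that every isomorphism within the family $R(M^{1,0},1)$ is induced by a base change of exactly the type used in the proof of Theorem \ref{thmmu1}, and then to read off the induced action on the structure parameters. Since $M^{1,0}$ is the nilradical it is preserved by any isomorphism, so such a map restricts to an automorphism of $M^{1,0}$ and sends the complementary generator to $x'=Hx+\sum_{t}C_t e_t$ with $H\neq0$; it is therefore determined by the data $e_1'=\sum_i A_i e_i$, $e_2'=\sum_t B_t e_t$, $x'=Hx+\sum_t C_t e_t$, the elements $e_i'$ with $i\geq3$ being forced by $[e_i',e_1']=e_{i+1}'$. Substituting this general change into each table gives, as in Theorem \ref{thmmu1}, the leading relations $a_1'=Ha_1$ and $b_2'=Hb_2$. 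The ratio of the $e_1$-coefficient of $[e_1,x]$ to the $e_2$-coefficient of $[e_2,x]$ is invariant under the rescaling by $H$; it vanishes exactly for $R_1$ (where $a_1=0$) and is nonzero for $R_2,\dots,R_7$, which already separates $R_1$ from the remaining families.

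For $R_1(M^{1,0},1)(\alpha_2,\dots,\alpha_n)$ the diagonal coefficient $b_2=1$ forces $H=1$, so the only rescaling available comes from the leading coefficient $A_1$ of the generator change. Tracking the weights of $M^{1,0}$ one sees that each parameter is multiplied by a fixed power of $A_1$, i.e. the action is a weighted scaling; choosing $A_1$ then normalises the first non-vanishing $\alpha_t$ to $1$. The remaining freedom in the $A_i$ ($i\geq2$), $B_t$ and $C_t$ has to be shown to act trivially on the reduced parameters, so that distinct normalised tuples give non-isomorphic algebras.

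For the families $R_i(M^{1,0},1)(\alpha)$ with $2\leq i\leq6$ the relation $[x,e_1]=-e_1$ forces $H=1$ and pins down the whole diagonal of $\mathcal R_{x|_N}$; the residual base change then acts on the single parameter $\alpha$ by a nonzero scalar, so every $\alpha\neq0$ rescales to $1$ and yields the representative $R_i(M^{1,0},1)(1)$. When $\alpha=0$ the table coincides with that of $R_7(M^{1,0},1)(\beta)$ at the resonant eigenvalue $\beta\in\{1,2,3-n,4-n,5-n\}$ attached to $R_i$, which is how these five values re-enter the $R_7$ list. The main obstacle, as I see it, is to prove that at each of these five resonant eigenvalues the extra off-diagonal term carrying $\alpha$ is a genuine invariant: at the non-resonant eigenvalues such a term was removed in Theorem \ref{thmmu1}, but at resonance the removal fails, and one must verify that no admissible choice of the $C_t$ can create or cancel it. I would do this by isolating the relevant weight component of $x'$ and checking that the coefficient of $\alpha$ transforms homogeneously and independently of the $C_t$.

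Finally, for $R_7(M^{1,0},1)(\alpha)$ the relation $[x,e_1]=-e_1$ again forces $H=1$, so the spectrum $\{1,\alpha,\alpha+1,\dots,\alpha+n-3,2\}$ of $\mathcal R_{x|_N}$ is an invariant and distinct $\alpha$ give non-isomorphic algebras. Combining the $H$-invariant ratio that isolates $R_1$, the weighted normalisation of the $R_1$-tuple, the resonance invariant distinguishing each $R_i(1)$ from $R_7(\beta)$, and the spectrum invariant for $R_7$, one obtains that the listed algebras are pairwise non-isomorphic; that every algebra of Theorem \ref{thmmu1} appears in the list then follows from the normalisations carried out above.
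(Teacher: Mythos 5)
Your outline reproduces the paper's proof step for step: the same general change of generators $e_1'=\sum_t A_te_t$, $e_2'=\sum_t B_te_t$, $x'=Hx+\sum_t C_te_t$ (with $e_i'$, $i\geq 3$, generated by right multiplication by $e_1'$), the same leading relations $a_1'=Ha_1$, $b_2'=Hb_2$ used to separate $R_1$ (where $a_1=0$) from $R_2,\dots,R_7$, the same forcing of $H=1$ inside each family, the same re-absorption of $R_i(0)$, $2\leq i\leq 6$, as $R_7(\beta)$ with $\beta\in\{1,2,5-n,4-n,3-n\}$, and the same conclusion that each $\alpha$ is scalable when nonzero and that its vanishing is invariant. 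Your one methodological variant, the spectrum argument for $R_7$, is valid and slightly cleaner than the paper's direct computation $\alpha'=\alpha$, provided you add the missing remark that $\mathcal{R}_{x'|_N}=H\mathcal{R}_{x|_N}+\mathcal{R}_{u|_N}$ with $u\in N$, and that $\mathcal{R}_{u|_N}$ is strictly triangular with respect to the eigenbasis of $\mathcal{R}_{x|_N}$, so the spectrum scales exactly by $H$.

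The gap is that the two verifications you explicitly postpone are not side issues; they \emph{are} the proof. The paper settles them by deriving, from the requirement that the primed table again be in canonical form, the exact transformation laws: for $R_1$, the products $[x',e_1']=0$, $[e_{n-1}',x']=e_{n-1}'$, $[e_1',x']=\alpha_2'e_n'$, $[x',x']=\alpha_n'e_n'$, $[e_2',x']=e_2'+\sum_t\alpha_t'e_t'$ force $C_1=\dots=C_{n-1}=0$, $A_{n-2}=A_{n-1}=0$, $H=1$, and yield $\alpha_2'=\alpha_2/A_1$, $\alpha_t'=\alpha_t/A_1^{t-2}$ $(3\leq t\leq n-1)$, $\alpha_n'=\alpha_n/A_1^2$; for $R_2,\dots,R_6$ one gets $\alpha'=\alpha A_1/B_2$, $\alpha'=\alpha B_2/A_1^2$, $\alpha'=\alpha/(A_1^{n-5}B_2)$, $\alpha'=\alpha/(A_1^{n-4}B_2)$, $\alpha'=\alpha/(A_1^{n-3}B_2)$, respectively. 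Homogeneity in $\alpha$ and independence of the $C_t$ is not a consequence of weight bookkeeping alone: before the constraint $[x',e_1']=0$ is imposed, $C_1$ genuinely enters (in the $R_1$ computation one finds $\alpha_3'=(\alpha_3+C_1)/A_1$), so the constraints must actually be derived, which is exactly the computational content your proposal announces but does not perform. Finally, even granting the laws, your claim that distinct normalised $R_1$-tuples are non-isomorphic needs a caveat (one the paper also glosses over): normalising the first nonzero parameter, say $\alpha_4$ of weight $2$, pins $A_1$ down only up to a root of unity, which still acts nontrivially on the later parameters, so the residual action is finite rather than trivial.
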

\begin{proof}
Let us consider the general change of generator basis elements of the algebra $R(M^{1,0},1)$:
\[e_1^\prime=\sum\limits_{t=1}^{n}A_{t}e_t, \quad e_2^\prime=\sum\limits_{t=1}^{n}B_{t}e_t,\quad x^\prime=Hx+\sum\limits_{t=1}^{n}C_{t}e_t.\]

From the products
\[[e_i^\prime,e_1^\prime]=e_{i+1}^\prime, \quad 2\leq i\leq n-1,\] we derive
\[e_{3}^\prime=A_1\sum\limits_{i=3}^{n-1}B_{i-1}e_i+A_1B_{1}e_{n},\quad
e_{i}^\prime=A_1^{i-2}\sum\limits_{t=i}^{n-1}B_{t-i+2}e_t,\ \ 4\leq i\leq n-1,\quad
e_{n}^\prime=A_1\sum\limits_{t=3}^{n-1}A_{t-1}e_t+A_1^2e_n.\]

Considering
\[[e_1^\prime,e_2^\prime]=[e_n^\prime,e_1^\prime]=0,\] we deduce
\[B_1=A_t=0,\quad 2\leq t\leq n-3.\]

Thus we have the following:
$$e_1^\prime=A_{1}e_1+A_{n-2}e_{n-2}+A_{n-1}e_{n-1}+A_ne_n,\ \ \ e_2'=B_2e_2+\sum\limits_{t=3}^{n}B_{t}e_t,\ \ \ x^\prime=Hx+\sum\limits_{t=1}^{n}C_{t}e_t.$$
$$e_{i}^\prime=A_1^{i-2}\sum\limits_{t=i}^{n-1}B_{t-i+2}e_t,\ \ 3\leq i\leq n-1, \ \ e_{n}^\prime=A_1A_{n-2}e_{n-1}+A_1^2e_n $$

\begin{itemize}
\item Let us consider the family $R_1(M^{1,0},1)(\alpha_2,\dots, \alpha_{n}).$

Considering the below presented products we obtain restrictions:
$$\left\{\begin{array}{lll}
[x^\prime,e_1^\prime]=0,& \Rightarrow & C_{i}=0, \ 1\leq i\leq n-2,\\[1mm]
[e_{n-1}^\prime,x^\prime]=e_{n-1}^\prime, & \Rightarrow & H=1, \\[1mm]
[e_1^\prime,x^\prime]=\alpha_{2}^\prime e_n^\prime, & \Rightarrow & A_{n-2}=A_{n-1}=0,\ \ \ \alpha_{2}'=\frac{\alpha_{2}}{A_1}, \\[1mm]
[x^\prime,x^\prime]=\alpha_{n}^\prime e_n^\prime, & \Rightarrow & C_{n-1}=0, \ \alpha_{n}'=\frac{\alpha_{n}}{A_1^2}, \\[1mm]
[e_2^\prime,x^\prime]=e_2'+\sum\limits_{t=3}^{n-1}\alpha_{t}'e_t', & \Rightarrow & \alpha_{t}'=\frac{\alpha_t}{A_1^{t-2}},\ \ \ 3\leq t\leq n-1.  \\[1mm]
\end{array}\right.$$

Thus, we deduce the following invariant relations:
$$\alpha_{2}'=\frac{\alpha_{2}}{A_1}, \ \alpha_{n}'=\frac{\alpha_{n}}{A_1^2},\ \alpha_{t}'=\frac{\alpha_t}{A_1^{t-2}},\ \ 3\leq t\leq n-1.$$

From these relations we conclude that by suitable value of $A_1$ the first non-vanishing parameter $\{\alpha_2,\dots, \alpha_{n}\}$ of the algebra from $R_1(M^{1,0},1)(\alpha_2,\dots, \alpha_{n})$ can be scaled to $1$.

\item  Consider the family of algebras $R_2(M^{1,0},1)(\alpha)$.

Similarly, we have
$$\left\{\begin{array}{lll}
[e_{n}^\prime,x^\prime]=2e_{n}^\prime,& \Rightarrow & H=1,\ \ A_{n-2}=0,\\[1mm]
[x^\prime,e_1^\prime]=-e_1'+\alpha^\prime e_{2}^\prime, & \Rightarrow & A_{n}=-A_1C_1, \ A_{n-1}=-A_1C_{n-2},\\[1mm]
&& B_t=0, \ 3\leq t\leq n, \ C_t=0, \ 2\leq t\leq n-2,\ \alpha'=\frac{\alpha A_1}{B_2}.\\[1mm]
\end{array}\right.$$

If $\alpha\neq0$, then by putting $B_2=A_1 \alpha$, we get $\alpha'=1$ and we have the algebra $R_2(M^{1,0},1)(1)$;

If $\alpha=0$, we obtain the algebra $R_7(M^{1,0},1)(1)$.

\item Consider the family $R_3(M^{1,0},1)(\alpha)$. Then considering the products

\[[e_{n}^\prime,x^\prime]=2e_{n}^\prime,\quad [e_2^\prime,x^\prime]=2e_2'+\alpha'e_{n}',\]
we get
\[H=1,\quad A_{n-2}=(t-2)B_t+C_1B_{t-1}=0,\quad 3\leq t\leq n-1,\quad \alpha'=\frac{\alpha B_2}{A_{1}^{2}}.\]

If $\alpha\neq0$, then putting $B_2=\frac{A_1^2}{\alpha}$, we have $\alpha'=1$ and hence we obtain the algebra $R_3(M^{1,0},1)(1)$;

If $\alpha=0$, then we derive $R_7(M^{1,0},1)(2)$.

\item Consider the algebra $R_4(M^{1,0},1)(\alpha)$. From the products
\[[e_{n-1}^\prime,x^\prime]=2e_{n-1}^\prime, \quad [e_n^\prime,x^\prime]=\alpha'e_{n-1}'+2e_{n}^\prime,\] we derive
\[H=1, \quad \alpha'=\frac{\alpha}{A_{1}^{n-5}B_2}.\]

If $\alpha\neq0$, then setting $B_2=\frac{\alpha}{A_1^{n-5}}$, we get $\alpha'=1$ and the algebra $R_4(M^{1,0},1)(1)$;

If $\alpha=0$, then we obtain algebra $R_7(M^{1,0},1)(5-n)$.

\item Now consider the case of family of algebras $R_5(M^{1,0},1)(\alpha)$.  From the products
\[[e_{n}^\prime,x^\prime]=2e_{n}^\prime,\quad [e_1^\prime,x^\prime]=e_1'+\alpha'e_{n-1}',\]
we obtain
\[H=1,\quad A_{n-2}=0, \quad A_{n}=-A_1C_1,\quad \alpha'=\frac{\alpha}{A_{1}^{n-4}B_2}.\]

If $\alpha\neq0$, then taking $B_2=\frac{\alpha}{A_1^{n-4}},$ we have $\alpha'=1$ and the algebra $R_5(M^{1,0},1)(1)$;

If $\alpha=0$, then we derive $R_7(M^{1,0},1)(4-n)$.

\item The case of the family $R_6(M^{1,0},1)(\alpha)$.  From
\[[e_{n}^\prime,x^\prime]=2e_{n}^\prime, \ [e_{1}^\prime,x^\prime]=e_{1}^\prime,\ [x^\prime,e_1^\prime]=-e_{1}^\prime,\ [x^\prime,x^\prime]=\alpha'e_{n-1}'\]
we obtain
\[H=1, \ A_{n}=-A_1C_1, \  A_{n-2}=A_{n-1}=C_{t}=0,\ 2\leq t\leq n-2,\]\[2C_{n}=-C_1^2,\ \alpha'=\frac{\alpha}{A_{1}^{n-3}B_2}.\]

If $\alpha\neq0$, then taking $B_2=\frac{\alpha}{A_{1}^{n-3}},$ we can assume $\alpha'=1$ and we obtain $R_6(M^{1,0},1)(1)$;

If $\alpha=0$, we derive the algebra $R_7(M^{1,0},1)(3-n)$.

\item Finally, consider the algebra $R_7(M^{1,0},1)(\alpha).$ Then from the products
\[[e_{n}^\prime,x^\prime]=2e_{n}^\prime,\quad [e_{n-1}^\prime,x^\prime]=(n-3+\alpha')e_{n-1}',\]
we obtain $H=1, \ A_{n-2}=0, \ \alpha'=\alpha.$
\end{itemize}\end{proof}

\begin{thm} There  are no  solvable  Leibniz  algebra  with  nilradical $M^{1,1}.$

\end{thm}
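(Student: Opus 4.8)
The plan is to argue by contradiction: assume a (non-nilpotent) solvable Leibniz algebra $R$ with nilradical $N=M^{1,1}$ exists. By Corollary~\ref{cor111} the complementary space is one-dimensional, so $R=M^{1,1}\oplus\langle x\rangle$, and, exactly as at the start of the proof of Theorem~\ref{thmmu1}, the operator $\mathcal{R}_{x|_N}$ is a derivation of $M^{1,1}$; by Proposition~\ref{prop1} with $\delta=1$ it is the matrix $d_1$ subject to $b_2=3a_1$, and it prescribes the products $[e_i,x]=\mathcal{R}_{x|_N}(e_i)$. The first step is to record that $\mathcal{R}_{x|_N}$ must be non-nilpotent by \cite[Theorem~3.2]{Nulfilrad}. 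Since $\delta=1$ forces $b_2=3a_1$, the diagonal of $d_1$ equals $a_1,3a_1,4a_1,\dots,na_1,2a_1$, i.e.\ consists of multiples of $a_1$; and if $a_1=0$ then $b_2=0$, all diagonal entries of $d_1$ vanish, whence $\det(d_1-\lambda\,\mathrm{Id})=(-\lambda)^n$ and $d_1$ is nilpotent. Hence non-nilpotency forces $a_1\neq0$, and the whole point will be to contradict this.

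Next I would extract one left product. As $N$ is an ideal, $[x,e_i]\in[R,N]\subseteq N$, so I may write $[x,e_i]=\sum_{t=1}^{n}c_{i,t}e_t$. By the remark in the Preliminaries, $z:=[x,e_2]+[e_2,x]$ lies in $\Ann_r(R)$; using the products $[e_2,e_1]=e_3$ and $[e_2,e_2]=e_5$ of $M^{1,1}$ one computes $[e_2,z]=\gamma_1 e_3+\gamma_2 e_5$, where $\gamma_1,\gamma_2$ are the $e_1$- and $e_2$-coordinates of $z$. Since $z\in\Ann_r(R)$ forces $[e_2,z]=0$, in particular the $e_2$-coordinate of $z$ vanishes; as $[e_2,x]=d_1(e_2)=b_2e_2+\cdots$, this reads $c_{2,2}+b_2=0$, that is $c_{2,2}=-b_2=-3a_1$. (The same relation also falls out of $\mathcal{L}(e_2,x,e_2)=0$.)

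The crux is the single identity $\mathcal{L}(x,e_1,e_2)=0$. Because $[e_1,e_2]=0$ in $M^{1,1}$, it collapses to $[[x,e_2],e_1]=[[x,e_1],e_2]$. On the right, $[[x,e_1],e_2]=\sum_{t=2}^{n-4}c_{1,t}e_{t+3}$ is supported on $e_5,e_6,\dots$ and so carries no $e_3$; on the left, $[[x,e_2],e_1]=c_{2,1}e_n+c_{2,2}e_3+\sum_{t=3}^{n-2}c_{2,t}e_{t+1}$ contributes $c_{2,2}$ to the $e_3$-coordinate. Comparing coefficients of $e_3$ thus yields $c_{2,2}=0$, and together with $c_{2,2}=-3a_1$ this gives $a_1=0$, contradicting $a_1\neq0$. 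Therefore no solvable Leibniz algebra has nilradical $M^{1,1}$.

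I expect the main obstacle to be \emph{locating} this fatal identity rather than verifying it. All the other relations one writes down --- the annihilator constraints, $\mathcal{L}(e_i,x,e_j)=0$, and $\mathcal{L}(x,e_i,e_j)=0$ for the remaining indices --- merely pin down the coefficients $c_{i,t}$ and $\delta_t$ and are consistent for every value of $a_1$; it is only the lowest-degree ($e_3$) part of $\mathcal{L}(x,e_1,e_2)=0$, read against the rigid normalization $b_2=3a_1$ that the square $[e_2,e_2]=e_5$ imposes on any derivation, that forces $a_1=0$ and destroys the non-nilpotency guaranteed by \cite{Nulfilrad}. The remaining Leibniz identities then need only be checked to be automatically satisfied and play no further role.
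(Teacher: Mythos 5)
Your proof is correct and follows essentially the same route as the paper: both write the brackets via Proposition~\ref{prop1} with $\delta=1$ (so $b_2=3a_1$), use non-nilpotency of $\mathcal{R}_{x|_{N}}$ to insist $a_1\neq 0$, obtain $c_{2,2}=-3a_1$ (the paper from $\mathcal{L}(e_2,x,e_2)=0$, you from the right-annihilator property, which you correctly note are equivalent), and then derive the contradiction $a_1=0$ from the $e_3$-component of $\mathcal{L}(x,e_1,e_2)=0$. The only difference is cosmetic: you spell out why $a_1=0$ would make the derivation nilpotent, a step the paper leaves implicit in its assertion $a_1\neq 0$.
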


\begin{proof} From Proposition \ref{prop1} we have the products in the algebra of the family $R(M^{1,1}, 1)$:
$$\begin{cases}
[e_1,x]=a_{1}e_1+a_{n-2}e_{n-2}+a_{n-1}e_{n-1}+a_{n}e_n,\ [e_2,x]=3a_1e_2+\sum\limits_{t=3}^{n}b_{t}e_t,\\[1mm]
[e_i,x]=(i+1)a_{1}e_i+\sum\limits_{t=i+1}^{n-1}b_{t-i+2}e_t,\ 3\leq i\leq n-1,\ [e_n,x]=a_{n-2}e_{n-1}+2a_{1}e_{n},\\[1mm]
[x,e_i]=\sum\limits_{t=1}^{n}c_{i,t}e_t,\ 1\leq i\leq n-1,\ [x,x]=\sum\limits_{t=1}^{n}\delta_te_t,\\[1mm]
\end{cases}$$
where $a_1\neq 0$.

Applying the Leibniz identity, we have restrictions of structure constants
$$\left\{\begin{array}{lll}
{\mathcal L}(e_1,x,e_1)=0, & \Rightarrow & c_{1,1}=-a_{1},\\[1mm]
{\mathcal L}(e_2,x,e_2)=0,& \Rightarrow & c_{2,1}=0,\ c_{2,2}=-3a_1, \\[1mm]
{\mathcal L}(x,e_1,e_2)=0,& \Rightarrow & a_1=0. \\[1mm]
\end{array}\right.$$

Thus, we get a contradiction with condition $a_1\neq0 $, which imply the non existence of a solvable Leibniz algebra with nilradical $M^{1,1}$ and one-dimensional complemented space.
\end{proof}

Similar to Theorems \ref{thmmu1} and \ref{thm35} we give the description up to isomorphism of solvable Leibniz algebras with nilradical $M^{2,\lambda}$ and one-dimensional complementary space to the nilradical, i.e., solvable Leibniz algebras $R(M^{2,\lambda},1).$

\begin{thm} An arbitrary algebra of the family $R(M^{2,\lambda},1)$ is isomorphic to one of the following pairwise non-isomorphic algebras:
$$\begin{array}{l}
R_1(M^{2,\lambda},1):\\[1mm]
\begin{cases}
[e_1,x]=e_1+e_{n-1}, & [e_2,x]=2e_2+(1+\lambda)e_n,\\[1mm]
[e_i,x]=ie_i,& 3\leq i\leq n-2,\\[1mm]
[e_{n-1},x]=e_{n-1},& [e_n,x]=2e_{n},\\[1mm]
[x,e_1]=-e_1-e_{n-1},& [x,e_{n-1}]=-e_{n-1}, \\[1mm]
[x,e_{n}]=(\lambda-1)e_n, & \lambda\in\{-1,1\},\\[1mm]
\end{cases}\end{array}
 \quad\quad
 \begin{array}{l}
 R_2(M^{2,-1},1):\\[1mm]
\begin{cases}
[e_1,x]=e_1+e_n,& [e_n,x]=e_{n},\\[1mm]
[e_i,x]=ie_i,& 2\leq i\leq n-2,\\[1mm]
[x,e_1]=-e_1,& [x,e_{n}]=-e_n,\\[1mm]
\end{cases}\end{array}
$$

$$\begin{array}{l}
R_3(M^{2,-1},1):\\[1mm]
\begin{cases}
[e_i,x]=ie_i,& 1\leq i\leq n-2,\\[1mm]
[e_{n-1},x]=-e_{n-1},& [x,e_1]=-e_1,\\[1mm]
[x,e_{n-1}]=e_{n-1},& [x,x]=e_n,\\[1mm]
\end{cases}\end{array}\quad\quad\quad\quad
\begin{array}{l}
R_4(M^{2,-1},1)(\alpha):\\[1mm]
\begin{cases}
[e_i,x]=ie_i,& 1\leq i\leq n-2,\\[1mm]
[e_{n-1},x]=\alpha e_{n-1},& [e_n,x]=(1+\alpha)e_{n},\\[1mm]
[x,e_1]=-e_1,& [x,e_{n-1}]=-\alpha e_{n-1},\\[1mm]
[x,e_{n}]=-(\alpha+1)e_n,& \\[1mm]
\end{cases}\end{array}$$

$$\begin{array}{l}
R_5(M^{2,\lambda},1):\\[1mm]
\begin{cases}
[e_i,x]=ie_i, & 1\leq i\leq n-2,\\[1mm]
[e_{n-1},x]=\lambda e_{n-1}, & [e_n,x]=(1+\lambda)e_{n},\\[1mm]
[x,e_1]=-e_1, & [x,e_{n-1}]=-\lambda e_{n-1}, \\[1mm]
\lambda\notin\{-1,0,1\},&\\[1mm]
\end{cases}\end{array}\quad
\begin{array}{l}
R_6(M^{2,-1},1)(\alpha_{2},\dots,\alpha_{n}):\\[1mm]
\begin{cases}
[e_i,x]=\sum\limits_{t=i+1}^{n-2}\alpha_{t-i+1}e_t,& 1\leq i\leq n-3,\\[1mm]
[e_{n-1},x]=e_{n-1}+\alpha_{n-1}e_n,& [e_n,x]=e_{n},\\[1mm]
[x,e_{n-1}]=-e_{n-1}-\alpha_{n-1}e_n,& [x,e_{n}]=-e_n,\\[1mm]
[x,x]=\alpha_{n}e_{n-2}.&\\[1mm]
\end{cases}\end{array}$$

$$\begin{array}{l}
R_7(M^{2,0},1):\\[1mm]
\begin{cases}
[e_i,x]=ie_i,& 1\leq i\leq n-2,\\[1mm]
[e_{n-1},x]=-e_{n-1},& [x,e_1]=-e_1,\\[1mm]
[x,x]=e_n,&\\[1mm]
\end{cases}\end{array}\quad\quad
\begin{array}{l}
R_8(M^{2,0},1):\\[1mm]
\begin{cases}
[e_i,x]=ie_i, & 1\leq i\leq n-2,\\[1mm]
[e_{n-1},x]=e_{n-3}+(n-3)e_{n-1},& [x,e_1]=-e_1,\\[1mm]
[e_n,x]=e_{n-2}+(n-2)e_{n},\\[1mm]
\end{cases}\end{array}$$

$$\begin{array}{l}
R_9(M^{2,0},1):\\[1mm]
\begin{cases}
[e_i,x]=ie_i,& 1\leq i\leq n-2,\\[1mm]
[e_{n-1},x]=e_{n-2}+(n-2)e_{n-1},&[x,e_1]=-e_1,\\[1mm]
[e_n,x]=(n-1)e_{n},\\[1mm]
\end{cases}\end{array}\quad
\begin{array}{l}
R_{10}(M^{2,0},1):\\[1mm]
\begin{cases}
[e_1,x]=e_1+e_{n},&[x,e_1]=-e_1,\\[1mm]
[e_i,x]=ie_i,&2\leq i\leq n-2,\\[1mm]
[e_n,x]=e_{n},&[x,x]=-e_{n-1},\\[1mm]
\end{cases}\end{array}$$

$$\begin{array}{l}
R_{11}(M^{2,0},1)(\alpha):\\[1mm]
\begin{cases}
[e_i,x]=ie_i,& 1\leq i\leq n-2,\\[1mm]
[e_{n-1},x]=\alpha e_{n-1},& [e_n,x]=(1+\alpha)e_{n},\\[1mm]
[x,e_1]=-e_1,\\[1mm]
\end{cases}\end{array}\quad
\begin{array}{l}
R_{12}(M^{2,0},1)(\alpha_{2},\dots,\alpha_n):\\[1mm]
\begin{cases}
[e_i,x]=\sum\limits_{t=i+1}^{n-2}\alpha_{t-i+1}e_t,& 1\leq i\leq n-3,\\[1mm]
[e_{n-1},x]=e_{n-1}+\alpha_{n-1}e_n,& [e_n,x]=e_{n},\\[1mm]
[x,x]=\alpha_{n}e_{n-2}.\\[1mm]
\end{cases}\end{array}$$
where $\alpha_i, \alpha\in \mathbb{C}$ and the  first non-vanishing parameter $\{\alpha_{2},\dots,\alpha_{n}\}$
in the algebra from $R_6(M^{2,-1},1)(\alpha_{2},\dots,\alpha_{n})$ and $R_6(M^{2,0},1)(\alpha_{2},\dots,\alpha_{n})$ can be scaled to $1$.
\end{thm}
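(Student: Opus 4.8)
The plan is to follow the scheme of Theorems~\ref{thmmu1} and~\ref{thm35}, replacing the derivation model of $M^{1,0}$ by that of $M^{2,\lambda}$ supplied by Proposition~\ref{prop1}. First I fix a basis $\{e_1,\dots,e_n,x\}$ in which $N=M^{2,\lambda}$ carries its standard multiplication table and $x$ spans the one-dimensional complement $Q$. Since $\mathcal{R}_{x|_N}$ is a non-nilpotent derivation of $N$, the products $[e_i,x]$ are read directly from the matrix $d_2$ of Proposition~\ref{prop1}, introducing the parameters $a_i,b_j$; the remaining unknowns are the coefficients in $[x,e_i]=\sum_t c_{i,t}e_t$ $(1\le i\le n-1)$ and $[x,x]=\sum_t\delta_t e_t$.

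The reduction to a normal form rests on three standard devices. First, membership in $\Ann_r(R)$: the square $e_2=[e_1,e_1]$ and the symmetric combination $(\lambda+1)e_n=[e_1,e_{n-1}]+[e_{n-1},e_1]$, together with $[x,x]$ and the elements $[e_i,x]+[x,e_i]$, lie in $\Ann_r(R)$, which forces $[x,e_2]=0$, pins $c_{1,1}=-a_1$, and (when $\lambda\neq-1$) gives $[x,e_n]=0$. Second, a shift $x'=x-\sum_t\gamma_t e_t$ removes the off-diagonal part of $[x,e_1]$. Third, the Leibniz identities $\mathcal{L}(x,e_1,e_2)=\mathcal{L}(x,e_i,e_1)=\mathcal{L}(x,e_1,x)=\mathcal{L}(x,x,e_1)=0$ express the $\delta_t$ through the $a_i,b_j$ and annihilate the surviving $c_{i,t}$, yielding an explicit preliminary table analogous to \eqref{eq111}.

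The case analysis is governed by the two independent diagonal parameters of $d_2$, namely the eigenvalue $a_1$ on the chain $e_1\to e_2\to\dots\to e_{n-2}$ and the eigenvalue $b_{n-1}$ on $e_{n-1}$ (with $e_n$ carrying $a_1+b_{n-1}$). Non-nilpotency of $\mathcal{R}_{x|_N}$ forces $(a_1,b_{n-1})\neq(0,0)$, and a rescaling of $x$ normalizes the nonzero one of them to $1$. The principal branching is then on $\lambda$: the resonant values $\lambda=-1,0,1$, where the combination $(\lambda+1)e_n$ degenerates or the constraint ``$b_{n-3}=0$ if $\lambda\neq0$'' of Proposition~\ref{prop1} is lifted, produce the special families, while generic $\lambda\notin\{-1,0,1\}$ gives $R_5$. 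Within each branch the residual free coefficients are eliminated by changes of the generator pair $\{e_1,e_2\}$ and further shifts of $x$, exactly as in the seven subcases of Theorem~\ref{thmmu1}; the non-redundancy of the list and the scaling of the first non-vanishing parameter in $R_6$ and $R_{12}$ are obtained, as in Theorem~\ref{thm35}, by running the general admissible change $e_1'=\sum_t A_t e_t$, $e_2'=\sum_t B_t e_t$, $x'=Hx+\sum_t C_t e_t$, propagating it through $[e_i',e_1']=e_{i+1}'$, and reading off the invariant relations between the primed and unprimed parameters.

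I expect the main obstacle to be organizational rather than conceptual: the genuine difficulty is the proliferation of resonance conditions arising from coincidences among the eigenvalues $a_1,2a_1,\dots,(n-2)a_1,b_{n-1},a_1+b_{n-1}$ and the exceptional values of $\lambda$. Each such coincidence either permits or blocks a normalizing shift, and one must verify that the twelve families are simultaneously exhaustive and pairwise non-isomorphic. In particular the constraint $b_{n-3}=0$ for $\lambda\neq0$ from Proposition~\ref{prop1} has to be tracked throughout, since it is exactly what separates the $\lambda=0$ families $R_7$--$R_{12}$ (notably the off-diagonal algebras $R_8,R_9$ with $b_{n-3}\neq0$) from the rest.
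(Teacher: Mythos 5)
Your plan is essentially the paper's own treatment: the paper gives no separate proof of this theorem, stating only that the description is obtained ``similar to Theorems \ref{thmmu1} and \ref{thm35}'', and your outline --- products read off from the matrix $d_2$ of Proposition \ref{prop1}, right-annihilator constraints, shifts of $x$, Leibniz identities, case analysis on the diagonal pair $(a_1,b_{n-1})$ and on the resonant values $\lambda\in\{-1,0,1\}$ versus generic $\lambda$, followed by the general change of generators to extract the invariants and scale parameters --- is precisely that method. Hence the proposal is correct and takes essentially the same approach as the paper.
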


In order to complete the classification of solvable Leibniz algebras whose nilradical is quasi-filiform Leibniz algebra of maximum length and the dimension of complementary space is one-dimensional, we need to classify seven-dimensional solvable Leibniz algebras with nilradical $M^{3,1}$.

\begin{thm} An arbitrary algebra of the family $R(M^{3,1},1)$ is isomorphic to the algebra:
$$\left\{\begin{array}{ll}
[e_1,x]=-[x,e_1]=e_1,&\\[1mm]
[e_i,x]=-[x,e_i]=(i+1)e_i,&2\leq i\leq4,\\[1mm]
[e_5,x]=6e_{5},\ \ \ [e_6,x]=2e_{6}.&\\[1mm]
\end{array}\right.$$
\end{thm}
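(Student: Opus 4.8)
\emph{Proof strategy.} By Corollary \ref{cor111} only $s=1$ can occur, so it suffices to describe $R(M^{3,1},1)=M^{3,1}\oplus Q$ with $Q=\langle x\rangle$. The plan is to follow the scheme of Theorems \ref{thmmu1} and \ref{thm35}. First I would record the right products. The operator $\mathcal R_{x\mid N}$ is a non-nilpotent derivation of $N=M^{3,1}$, so by Proposition \ref{prop2} it has the displayed matrix form; since its diagonal is $(a_1,3a_1,4a_1,5a_1,6a_1,2a_1)$, non-nilpotency forces $a_1\neq 0$, and after the rescaling $x\mapsto a_1^{-1}x$ one may take $a_1=1$. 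This gives
\[
[e_1,x]=e_1+a_3e_3+a_4e_4+a_5e_5+a_6e_6,\quad [e_2,x]=3e_2+b_3e_3+b_4e_4+b_5e_5+b_6e_6,
\]
\[
[e_3,x]=4e_3+b_3e_4+b_4e_5,\quad [e_4,x]=5e_4+b_3e_5,\quad [e_5,x]=6e_5,\quad [e_6,x]=2e_6,
\]
so that $e_1,\dots,e_6$ carry the pairwise distinct eigenvalues $1,3,4,5,6,2$ together with the off-diagonal moduli $a_3,\dots,a_6,b_3,\dots,b_6$.

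Next I would determine the left products $[x,e_i]=\sum_t c_{i,t}e_t$ and $[x,x]=\sum_t\delta_t e_t$. Since $e_6=[e_1,e_1]$ and $e_5=[e_2,e_2]$, the identities $\mathcal L(x,e_1,e_1)=0$ and $\mathcal L(x,e_2,e_2)=0$ give at once $[x,e_6]=[x,e_5]=0$. The membership $[e_1,x]+[x,e_1]\in \Ann_r(R)$ fixes the $e_1$-coefficient of $[x,e_1]$ to be $-1$, while $[x,x]\in \Ann_r(R)$ restricts $[x,x]$; applying the remaining identities $\mathcal L(e_i,x,e_1)=0$, $\mathcal L(x,e_1,e_j)=0$, $\mathcal L(x,e_i,x)=0$ and $\mathcal L(x,x,e_i)=0$ for the relevant indices forces the surviving $c_{i,t}$ and $\delta_t$ either to vanish or to be expressible through the $a$'s and $b$'s. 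As in Theorem \ref{thmmu1}, an appropriate shift of $x$ by a nilradical element clears the inessential components of $[x,e_1]$.

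Finally I would remove the off-diagonal moduli by a triangular change of the generators, $e_1'=\sum_t A_t e_t$, $e_2'=\sum_t B_t e_t$, $x'=Hx+\sum_t C_t e_t$, propagating the change to $e_3,\dots,e_6$ through $e_3=[e_2,e_1]$, $e_4=[e_3,e_1]$, $e_5=[e_4,e_1]$ and $e_6=[e_1,e_1]$, exactly as in the isomorphism analysis of Theorem \ref{thm35}. The decisive structural fact is that the eigenvalues $1,2,3,4,5,6$ are pairwise distinct: every denominator that appears as an eigenvalue difference is therefore nonzero, so one can solve successively for the correction coefficients that annihilate $a_3,a_4,a_5,a_6$ and then $b_3,b_4,b_5,b_6$ (and the residual $\delta_t$) without reintroducing the entries already cleared. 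Since after these reductions no free parameter survives, there is, up to isomorphism, a single algebra, namely the diagonal one in the statement, and a direct check of $\mathcal L(\,\cdot\,,\cdot\,,\cdot\,)=0$ confirms it is indeed a Leibniz algebra.

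The main obstacle is the bookkeeping of this last step: the basis changes must be ordered so that each is triangular with respect to the grading of $M^{3,1}$ and so that clearing one off-diagonal entry does not perturb those already normalized, which is precisely where the distinctness of the eigenvalues is used. A secondary point needing care is to verify that the left products $[x,e_i]$ are completely pinned down by the annihilator constraints and the identities $\mathcal L(x,e_i,x)=0$, rather than producing new moduli; this is what guarantees that the final answer is a single algebra instead of a family.
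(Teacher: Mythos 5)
Your proposal is correct and follows essentially the same route as the paper: Corollary \ref{cor111} plus Proposition \ref{prop2} for the right products (with $a_1\neq 0$ forced by non-nilpotency and scaled to $1$), the annihilator memberships $[e_1,x]+[x,e_1],[x,x]\in\Ann_r(R)$ and the identities $\mathcal L(x,e_j,e_j)=0$, $\mathcal L(x,e_i,x)=0$, etc.\ to pin down the left products, and triangular basis changes (shifts of $x$ and of the generators) to clear all off-diagonal parameters. The only differences are cosmetic ordering ones — the paper clears $a_3,\dots,a_6$ by the shift $x'=x-a_6e_1+a_3e_2+a_4e_3+a_5e_4$ and the $b_t$ by an explicit change of $e_2,\dots,e_5$ before analyzing the left products — while your eigenvalue-distinctness justification is exactly why those explicit changes exist.
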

\begin{proof} Due to Corollary \ref{cor111} the dimension of complementary space to the nilradical $M^{3,1}$ is equal to one. From Proposition \ref{prop2} we have the following products in the algebra $R(M^{3,1}, 1)$:
$$\begin{cases}
[e_1,x]=e_1+a_{3}e_3+a_{4}e_4+a_{5}e_5+a_{6}e_6,& [e_2,x]=3e_2+b_{3}e_3+b_{4}e_4+b_{5}e_5+b_{6}e_6,\\[1mm]
[e_3,x]=4e_3+b_{3}e_4+b_4e_5,& [e_4,x]=5e_4+b_{3}e_5,\\[1mm]
[e_5,x]=6e_5,& [e_6,x]=2e_{6}.\\[1mm]
\end{cases}$$

Putting
$$x'=x-a_{6}e_1+a_{3}e_2+a_{4}e_3+a_{5}e_4,$$
we may assume $a_{t}=0, \ 3\leq t\leq 6$.

By taking the basis transformation as follows:
$$e_1^\prime=e_1,\
e_2^\prime=e_2-b_{3}e_{3}-\frac{1}{2}\Big(b_{4}-b_{3}^2\Big)e_{4}-\frac{1}{6}\Big(2b_{5}-3b_{3}b_{4}+b_{3}^3\Big)e_{5}+b_{6}e_{6},$$
$$e_3^\prime=e_3-b_{3}e_{4}-\frac{1}{2}\Big(b_{4}-b_{3}^2\Big)e_{5},\ e_4^\prime=e_4-b_{3}e_{5},\ e_5^\prime=e_5,\ e_6^\prime=e_6, \ x'=x$$
one can assume $b_{t}=0$ for  $3\leq t\leq 6.$

Since $e_1, e_2, e_3, e_4\notin \Ann_r(R(M^{3,1},1)),$ we may write
$$[x,e_1]=-e_1+c_{1}e_5+c_{2}e_6, \ [x,e_2]=-3e_2+c_{3}e_5+c_{4}e_6, \ [x,x]=c_{5}e_5+c_{6}e_6,$$
where $c_i$ are parameters.

From equalities $\mathcal{L}(x,e_i,e_1)=\mathcal{L}(x,e_j,e_j)=\mathcal{L}(x,e_j,x)=0$ with $i=2,3$ and $j=1,2$
we obtain
$$[x,e_3]=-4e_3, \ [x,e_4]=-5e_4, \ [x,e_5]=[x,e_6]=0, \ [x,e_1]=-e_1, \ [x,e_2]=-3e_2.$$

Further, making the change of basis element $x$ in the following way
$$x^\prime=x-\frac{1}{5}c_{5}e_{5}-\frac{1}{2}c_{6}e_{6},$$
we may assume $[x,x]=0$, which completes the proof of theorem. \end{proof}

\subsection{Descriptions of algebras $R(M^{i,*}, 2), \ i=1, 2$}

\

\

In this subsection we classify solvable Leibniz algebras whose nilradical is quasi-filiform Leibniz maximal length and the dimension of the complementary space to the nilradical is equal to two.

\begin{thm} An arbitrary algebra of the family $R(M^{1,0},2)$ is isomorphic to the following algebra:
$$R(M^{1,0},2):\left\{\begin{array}{llll}
[e_1,x_1]=e_1,& [e_i,x_1]=(i-2)e_i,& 3\leq i\leq n-1,& [e_n,x_1]=2e_{n},\\[1mm]
[x_1,e_1]=-e_1,& [e_i,x_2]=e_i,& 2\leq i\leq n-1.&\\[1mm]
\end{array}\right.$$
\end{thm}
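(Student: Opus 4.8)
The plan is to follow the scheme used in the proof of Theorem~\ref{thmmu1}, now carried out for a two-dimensional complementary space. By Corollary~\ref{cor111} the case $s=2$ is the maximal one, so I would write $R=M^{1,0}\oplus Q$ with $Q=\langle x_1,x_2\rangle$. Each restriction $\mathcal{R}_{x_1|_N},\mathcal{R}_{x_2|_N}$ is a non-nilpotent derivation of $N=M^{1,0}$ and hence, by Proposition~\ref{prop1}, has the matrix form $d_1$ with its own parameters. First I would record the annihilator constraints: since $e_n=[e_1,e_1]\in\Ann_r(R)$ one gets $[x_k,e_n]=0$, each sum $[e_j,x_k]+[x_k,e_j]$ lies in $\Ann_r(R)$, and the squares $[x_k,x_k]$ together with $[x_1,x_2]+[x_2,x_1]$ lie in $\Ann_r(R)\subseteq N$. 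The semisimple part of each $d_1$ is governed by the pair $(a_1,b_2)$; nil-independence of $x_1,x_2$ forces these two pairs to be linearly independent, for otherwise a nonzero combination $\alpha_1\mathcal{R}_{x_1|_N}+\alpha_2\mathcal{R}_{x_2|_N}$ would be nilpotent. Hence, after a linear change of basis in $Q$, I may assume the diagonal parts are $(a_1,b_2)=(1,0)$ for $x_1$ and $(a_1,b_2)=(0,1)$ for $x_2$. With this normalization $x_1$ acts on $N$ with the weights $(1,0,1,2,\dots,n-3,2)$ on $(e_1,\dots,e_n)$, and $x_2$ with the constant weight $1$ on $e_2,\dots,e_{n-1}$ and weight $0$ on $e_1,e_n$.

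Next I would fully normalize $x_1$. The subalgebra $N\oplus\langle x_1\rangle$ is an algebra of type $R(M^{1,0},1)$ with $b_2=0$, so, arguing as in Case~2 of the proof of Theorem~\ref{thmmu1} (basis changes inside $N$ together with a shift $x_1\mapsto x_1+n_0$, $n_0\in N$), all off-diagonal parameters of $\mathcal{R}_{x_1}$ and the square $[x_1,x_1]$ are removed, leaving $[e_1,x_1]=e_1$, $[e_i,x_1]=(i-2)e_i$ for $3\le i\le n-1$, $[e_n,x_1]=2e_n$, $[x_1,e_1]=-e_1$ and all other $x_1$-products zero; that is, $x_1$ is placed in the form of the statement. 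Since these basis changes preserve the standard table of $N$, the operator $\mathcal{R}_{x_2}$ is again a derivation of $N$ in $d_1$-form, still with diagonal part $(0,1)$.

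The crucial point is then to show that $x_2$ is forced into diagonal form and that all cross terms vanish. Writing $\mathcal{R}_{x_1|_N}=D_1$ and $\mathcal{R}_{x_2|_N}=D_2+U_2$ with $D_2$ diagonal and $U_2$ its off-diagonal part, the identity $\mathcal{L}(y,x_1,x_2)=0$ for $y\in N$ yields $\mathcal{R}_{[x_1,x_2]}=[\mathcal{R}_{x_2},\mathcal{R}_{x_1}]=[U_2,D_1]$ on $N$, an off-diagonal (nilpotent) operator; comparing diagonal parts first shows $[x_1,x_2]\in N$. Because the only nonzero right multiplication operator of $M^{1,0}$ is $\mathcal{R}_{e_1}$, one has $\mathcal{R}_{[x_1,x_2]}=\gamma_1\mathcal{R}_{e_1}$, where $\gamma_1$ is the $e_1$-coefficient of $[x_1,x_2]$; comparing the $(n,1)$-entries (which vanishes for $[U_2,D_1]$ but equals $\gamma_1$ for $\mathcal{R}_{e_1}$) gives $[x_1,x_2]=0$ and hence $[U_2,D_1]=0$. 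Since the weight pairs $(\alpha_i,\beta_i)$ attached to the basis by $(D_1,D_2)$, namely $e_1\!:\!(1,0)$, $e_2\!:\!(0,1)$, $e_i\!:\!(i-2,1)$ for $3\le i\le n-1$, $e_n\!:\!(2,0)$, are pairwise distinct for $n\ge 6$, commuting with $D_1$ forces every off-diagonal entry of $U_2$ to vanish, so $[e_i,x_2]=e_i$ for $2\le i\le n-1$ and $[e_1,x_2]=[e_n,x_2]=0$. Finally the annihilator relations together with $\mathcal{L}(x_2,e_j,x_1)=\mathcal{L}(x_k,x_k,x_l)=0$ force $[x_2,e_j]=0$ for all $j$, $[x_2,x_1]=0$ and $[x_1,x_1]=[x_2,x_2]=0$, producing exactly the stated algebra.

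I expect the main obstacle to be this last step: establishing the rigidity of $x_2$ and the vanishing of all cross- and self-brackets once $x_1$ is fixed. The mechanism that makes it work is that the two commuting diagonal derivations separate the basis into pairwise distinct weight pairs, so the Leibniz identity -- encoding $\mathcal{R}_{[x_1,x_2]}=[\mathcal{R}_{x_2},\mathcal{R}_{x_1}]$ with $[x_1,x_2]\in N$ -- leaves no room for nontrivial off-diagonal entries of $\mathcal{R}_{x_2}$, nor for $Q$-valued or annihilator-valued brackets. One must also confirm that normalizing $x_1$ first does not obstruct the argument, which holds precisely because, for $n\ge 6$, the normalization of $x_2$ requires no further change of basis; the small values of $n$ would, if relevant, have to be checked separately.
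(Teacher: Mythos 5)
Your outline---normalizing the two diagonal parts to $(1,0)$ and $(0,1)$ by nil-independence, identifying $N\oplus\langle x_1\rangle$ with $R_7(M^{1,0},1)(0)$ via the $s=1$ classification, and then constraining $x_2$---is exactly the paper's strategy. But your crucial ``rigidity'' step for $x_2$ rests on a computational error, and its conclusion is false. Writing $\mathcal{R}_{x_2}|_N=D_2+U_2$, the identity $\mathcal{L}(y,x_1,x_2)=0$ does give $\mathcal{R}_{[x_1,x_2]}=[U_2,D_1]$ on $N$, but the $(n,1)$-entry of $[U_2,D_1]$ does \emph{not} vanish: since $U_2(e_1)=a_{n-2}e_{n-2}+a_{n-1}e_{n-1}+a_ne_n$ and $D_1$ has weight $1$ on $e_1$ and $2$ on $e_n$, one computes $[U_2,D_1](e_1)=(5-n)a_{n-2}e_{n-2}+(4-n)a_{n-1}e_{n-1}-a_ne_n$, whose $e_n$-coefficient is $-a_n$. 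Comparing with $\gamma_1\mathcal{R}_{e_1}$ therefore yields $\gamma_1=-a_n$ (and, from the other columns, $\gamma_1=-b_3$, $a_{n-2}=a_{n-1}=0$, $b_t=0$ for $t\geq 4$), not $\gamma_1=0$. The full content of this identity is only that $U_2=b_3\,\mathcal{R}_{e_1}|_N$: an off-diagonal part proportional to the \emph{inner} derivation $\mathcal{R}_{e_1}$ survives every Leibniz identity, because it can genuinely occur. Indeed, take the target algebra itself and replace $x_2$ by $\tilde x_2=x_2+e_1$; then $[e_i,\tilde x_2]=e_i+e_{i+1}$ for $2\leq i\leq n-2$, $[e_1,\tilde x_2]=e_n$, and $[x_1,\tilde x_2]=-e_1\neq 0$, so in this basis $b_3=a_n=1$ and the cross bracket is nonzero, while the algebra is unchanged. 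Hence your claim that ``the normalization of $x_2$ requires no further change of basis'' is wrong; one must perform the shift $x_2\mapsto x_2-b_3e_1$, which is precisely the first step of the paper's proof.

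The same over-claim recurs at the end. From $\gamma_1=0$ you infer $[x_1,x_2]=0$, but $\mathcal{R}_{[x_1,x_2]}=0$ only says the $e_1$-component of $[x_1,x_2]$ vanishes: every element of $\mathrm{span}\{e_2,\dots,e_n\}$ has zero right multiplication in $M^{1,0}$. Likewise, the components of $[x_2,x_1]$ and $[x_k,x_k]$ along $e_{n-1}$ and $e_n$ are not forced to vanish by any Leibniz identity; the paper removes them by further shifts such as $x_2\mapsto x_2-\frac{\delta_{2,1}^{n-1}}{n-3}e_{n-1}-\frac{\delta_{2,1}^{n}}{2}e_n$. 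The correct mechanism is not rigidity of $x_2$ but absorption of inner and annihilator directions into $x_2$ by changes of basis; once those shifts are inserted at the places indicated, your argument becomes essentially the paper's proof, but as written it does not close.
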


\begin{proof} Consider the matrix form of a derivation for the case of algebra $M^{1,0}$ given in Proposition \ref{prop1}. Since parameters $a_1$ and $b_2$ are in the diagonal, we have only two nil-independent derivations which correspond to the values of $(a_1, b_2)$ as $(1,0)$ and $(0,1)$. We denote these derivations by $\mathcal{R}_{x_{1}}$ and $\mathcal{R}_{x_{2}}$, respectively.

Let $\{e_1,\dots,e_{n},x_1,x_2 \}$ be a basis of $R(M^{1,0},2)$ such that $\mathcal{R}_{x_1}$ and $\mathcal{R}_{x_2}$ are operators of right multiplications on elements $x_1$ and $x_2,$ respectively. It is known that $\mathrm{span}\{e_1,\dots,e_n,x_1\}$ forms a subalgebra of the algebra $R(M^{1,0},2)$.
Therefore, this subalgebra is isomorphic to the algebra $R_7(M^{1,0},1)(0)$ in the list of Theorem \ref{thm35}. Then the table of multiplications of algebra $R(M^{1,0},2)$ can be written in the following form:
$$\begin{cases}
[e_1,x_1]=-[x_1,e_1]=e_1,&\\[1mm]
[e_i,x_1]=(i-2)e_i,&3\leq i\leq n-1,\\[1mm]
[e_n,x_1]=2e_{n},&\\[1mm]
[e_1,x_2]=a_{n-2}e_{n-2}+a_{n-1}e_{n-1}+a_{n}e_n,\\[1mm]
[e_2,x_2]=e_2+\sum\limits_{t=3}^{n}b_{t}e_t,\\[1mm]
[e_i,x_2]=e_i+\sum\limits_{t=i+1}^{n-1}b_{t-i+2}e_t, & 3\leq i\leq n-1,\\[1mm]
[e_n,x_2]=a_{n-2}e_{n-1},\\[1mm]
[x_2,e_i]=\sum\limits_{t=1}^{n}c_{i,t}e_t, & 1\leq i\leq n-1,\\[1mm]
[x_i,x_j]=\sum\limits_{t=1}^{n}\delta_{i,j}^te_t, & 1\leq i,j\leq 2,\\[1mm]
\end{cases}$$
where $a_i, b_i, c_{i,j}, \delta_{i,j}^k$ are parameters.

Setting
$x_2^\prime=x_2-b_{3}e_1-\sum\limits_{t=3}^{n-1}c_{1,t}e_{t-1}$, we may assume $b_{3}=c_{1,t}=0, \quad 3\leq t\leq n-1.$

Considering the Leibniz identity for triples mentioned below we obtain
$$\left\{\begin{array}{lll}
{\mathcal L}(e_1,x_2,x_2)=0, & \Rightarrow & \delta_{2,2}^1=0,\\[1mm]
{\mathcal L}(e_n,x_2,x_1)=0, & \Rightarrow & a_{n-2}=0, \\[1mm]
{\mathcal L}(e_2,x_2,x_1)=0, & \Rightarrow & \delta_{2,1}^1=b_{t}=0,\ 4\leq t\leq n, \\[1mm]
{\mathcal L}(x_2,e_1,e_2)=0, & \Rightarrow & c_{2,t}=0,\ 1\leq t\leq n-2,  \\[1mm]
{\mathcal L}(x_2,e_2,x_1)=0, & \Rightarrow & c_{2,t}=0,\ n-1\leq t\leq n,  \\[1mm]
{\mathcal L}(x_2,e_{i},e_1)=0,\ 2\leq i\leq n-2, & \Rightarrow & c_{i,t}=0, \ 3\leq i\leq n-1,\ 1\leq t\leq n, \\[1mm]
{\mathcal L}(x_2,e_{1},e_1)=0, & \Rightarrow & c_{n,t}=0, \ 1\leq t\leq n, \\[1mm]
{\mathcal L}(x_2,e_{1},x_1)=0, & \Rightarrow & c_{1,2}=c_{1,n}=\delta_{2,1}^t=0,\ 2\leq t\leq n-2, \\[1mm]
{\mathcal L}(e_1,x_{2},e_1)=0, & \Rightarrow & c_{1,1}=0, \\[1mm]
{\mathcal L}(e_1,x_{2},x_1)=0, & \Rightarrow & a_{n-1}=a_{n}=0, \\[1mm]
{\mathcal L}(x_1,e_{1},x_2)=0, & \Rightarrow & \delta_{1,2}^t=0,\ 2\leq t\leq n-2, \\[1mm]
{\mathcal L}(x_2,e_{1},x_2)=0, & \Rightarrow & \delta_{2,2}^t=0, \ 2\leq t\leq n-2. \\[1mm]

\end{array}\right.$$

Putting $x_2^\prime=x_2-\frac{\delta_{2,1}^{n-1}}{n-3}e_{n-1}-\frac{\delta_{2,1}^n}{2}e_n,$
we get $\delta_{2,1}^{n-1}=\delta_{2,1}^{n}=0.$

The equalities ${\mathcal L}(x_i,x_{2},x_1)=0$ for $1\leq i\leq2$ imply $\delta_{i,2}^{n-1}=\delta_{i,2}^{n}=0, \ 1\leq i\leq2$.

The proof of the theorem is complete.
\end{proof}

In a similar way we obtain the description of solvable algebras $R(M^{2,\lambda},2)$.
\begin{thm} An arbitrary algebra of the family $R(M^{2,\lambda},2)$ is isomorphic to one of the following non-isomorphic algebras:
$$
R(M^{2,0},2):\left\{\begin{array}{llll}
[e_i,x_1]=ie_i, & 1\leq i\leq n-2,& [e_{n},x_1]=e_n,& [x_1,e_1]=-e_1,\\[1mm]
[e_{n-1},x_2]=e_{n-1},& [e_{n},x_2]=e_n,&&\\[1mm]
\end{array}\right.$$$$
R(M^{2,-1},2):\left\{\begin{array}{llll}
[e_i,x_1]=ie_i, & 1\leq i\leq n-2,& &\\[1mm]
[x_1,e_1]=-e_1,& [e_{n},x_1]=e_n,&[x_1,e_n]=-e_n,&\\[1mm]
[e_{n-1},x_2]=e_{n-1},&[x_2,e_{n-1}]=-e_{n-1},& [e_{n},x_2]=e_n,&[x_2,e_n]=-e_n.\\[1mm]
\end{array}\right.$$
\end{thm}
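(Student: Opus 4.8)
The argument will parallel the proof for $R(M^{1,0},2)$ given above, with the parameter $\lambda$ entering through the two-sided product $[e_1,e_{n-1}]=\lambda e_n$. First I would read off from the matrix $d_2$ of Proposition \ref{prop1} that its only free diagonal entries are $a_1$ and $b_{n-1}$, so that, consistently with Corollary \ref{cor111}, a two-dimensional complement is spanned by nil-independent derivations $\mathcal{R}_{x_1},\mathcal{R}_{x_2}$ realising the diagonal data $(a_1,b_{n-1})=(1,0)$ and $(0,1)$; explicitly $x_1$ acts by $\mathrm{diag}(1,2,\dots,n-2,0,1)$ and $x_2$ by $\mathrm{diag}(0,\dots,0,1,1)$ on $e_1,\dots,e_n$. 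Fixing a basis $\{e_1,\dots,e_n,x_1,x_2\}$ in which these are the right multiplications, it remains to determine $[e_i,x_2]$ (given by $d_2$ with $a_1=0$, $b_{n-1}=1$, subject to $b_{n-3}=0$ when $\lambda\neq0$), the left products $[x_2,e_j]$, and the four square products $[x_i,x_j]$, each with generic structure constants; the subalgebra $\mathrm{span}\{e_1,\dots,e_n,x_1\}$, being a one-dimensional solvable extension of $M^{2,\lambda}$, will later be matched with a member of the already classified family $R(M^{2,\lambda},1)$ to pin down the products involving $x_1$.

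A first round of reductions uses the right annihilator. Since $[x_i,e_j]+[e_j,x_i]$ and $[x_i,x_i]$ lie in $\Ann_r(R(M^{2,\lambda},2))$, and since $[e_1,e_{n-1}]+[e_{n-1},e_1]=(\lambda+1)e_n$ forces $(\lambda+1)e_n\in\Ann_r$, one obtains $(\lambda+1)[x_i,e_n]=0$; thus $[x_i,e_n]=0$ whenever $\lambda\neq-1$, while for $\lambda=-1$ these products survive. This is already the source of the qualitative difference between the final answers over $M^{2,0}$ and over $M^{2,-1}$. As in the previous proof I would then absorb the off-diagonal nilpotent part of $\mathcal{R}_{x_2}$ by a shift $x_2\mapsto x_2-\sum_t C_t e_t$, and clear those components of $[x_2,e_j]$ and $[x_i,x_j]$ that a change of $x_2$ can reach.

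The decisive step, and the one I expect to be the main obstacle, is the identity $\mathcal{L}(x,e_1,e_{n-1})=0$ for a general complement element $x$, which probes $[e_1,e_{n-1}]=\lambda e_n$. Writing the diagonal action of $x$ as $\alpha\,\mathrm{diag}(1,\dots,n-2,0,1)+\beta\,\mathrm{diag}(0,\dots,0,1,1)$, an expansion shows that the $e_n$-component of this triple equals $\alpha\lambda-\beta$ modulo terms that vanish: for $\lambda\neq0$ the annihilator excludes $e_{n-1}$, removing the only source of a compensating $e_n$ coming from $[[x,e_{n-1}],e_1]$, while for $\lambda\neq-1$ one has $[x,e_n]=0$. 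Hence for $\lambda\notin\{0,-1\}$ the vanishing of this coefficient forces $\beta=\alpha\lambda$, confining the diagonal part of the complement to a single line and contradicting its two-dimensionality; so no algebra $R(M^{2,\lambda},2)$ exists unless $\lambda\in\{0,-1\}$. In the two admissible cases the compensating mechanisms are available — an $e_{n-1}$-term in $[x,e_{n-1}]$ when $\lambda=0$, a nonzero $[x,e_n]$ when $\lambda=-1$ — and the remaining Leibniz identities on triples from $\{e_1,e_2,e_{n-1},e_n,x_1,x_2\}$, notably $\mathcal{L}(e_1,x_2,x_2)$, $\mathcal{L}(e_i,x_2,x_1)$, $\mathcal{L}(x_2,e_i,e_1)$, $\mathcal{L}(x_2,e_i,x_1)$ and $\mathcal{L}(x_2,e_1,x_2)$, eliminate the surviving constants exactly as before; the delicate bookkeeping is to track how the restriction $b_{n-3}=0$ (valid only for $\lambda\neq0$) limits the nilpotent shifts of $x_2$ at each stage.

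Finally, specialising to the two surviving values and performing the last normalisations — shifting $x_2$ by multiples of $e_{n-2},e_{n-1},e_n$ to annihilate the leftover constants $\delta_{i,j}^t$ — I obtain precisely the tables $R(M^{2,0},2)$ and $R(M^{2,-1},2)$. These two are non-isomorphic because their nilradicals $M^{2,0}$ and $M^{2,-1}$ are non-isomorphic by Theorem \ref{thmquasi-fil}, which completes the classification.
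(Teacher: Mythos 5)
Your proposal is correct, and its overall scheme --- normalising the two nil-independent derivations to the diagonal data $(a_1,b_{n-1})=(1,0)$ and $(0,1)$ from Proposition \ref{prop1}, identifying $\mathrm{span}\{e_1,\dots,e_n,x_1\}$ with a member of the one-dimensional classification, then removing the remaining structure constants by Leibniz identities and shifts of $x_2$ --- is exactly the scheme the paper intends: its own proof of this theorem is the single sentence that the description is obtained ``in a similar way'' to $R(M^{1,0},2)$. Where you genuinely depart from (and improve on) the paper is the restriction on $\lambda$: the statement implicitly asserts that no algebra $R(M^{2,\lambda},2)$ exists for $\lambda\notin\{0,-1\}$, and the paper never argues this. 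Your derivation of that fact is sound, and I checked the key computation: since $\Ann_r(R)\cap N$ contains $e_2,\dots,e_{n-2}$ and $(\lambda+1)e_n$ but, for $\lambda\neq0$, no vector with a nonzero $e_1$- or $e_{n-1}$-component, the relations $[x,e_1]+[e_1,x]\in\Ann_r(R)$ and $[x,e_{n-1}]+[e_{n-1},x]\in\Ann_r(R)$ pin the $e_1$-component of $[x,e_1]$ to $-\alpha$ and the $e_{n-1}$-component of $[x,e_{n-1}]$ to $-\beta$, so for $\lambda\neq-1$ (where $[x,e_n]=0$) the $e_n$-coefficient of $\mathcal{L}(x,e_1,e_{n-1})$ is indeed $\lambda\alpha-\beta$; its vanishing confines every complement element to the line $\beta=\lambda\alpha$, and then a nontrivial combination of $x_1,x_2$ would have nilpotent restriction to $N$, contradicting nil-independence of a two-dimensional complement. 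Within the paper's scheme the same nonexistence would instead be read off from the one-dimensional classification (for $\lambda\notin\{-1,0,1\}$ only $R_5(M^{2,\lambda},1)$ occurs, whose diagonal is proportional to $(1,2,\dots,n-2,\lambda,1+\lambda)$), but turning that into a proof requires an isomorphism-invariance argument about the eigenvalue data which the paper also leaves unsaid; your direct computation avoids that subtlety. The remaining case analysis for $\lambda\in\{0,-1\}$ and the final non-isomorphism claim (via the non-isomorphic nilradicals $M^{2,0}$ and $M^{2,-1}$ of Theorem \ref{thmquasi-fil}) are only sketched in your write-up, but at the same level of detail as the paper's own template, and the tables you arrive at agree with the statement.
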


\section*{Acknowledgements}

This work was supported by Agencia Estatal de Investigaci\'{o}n (Spain), grant MTM2016-79661-P (European FEDER support included, UE).


\begin{thebibliography}{99}

\bibitem{Abdurasulov} K.K. Abdurasulov, J.K. Adashev, A.M. Sattarov {\it Solvable Leibniz algebras with 2-filiform nilradical.} Uzbek Math. J., 4, 2016, 16--23.

\bibitem{Bar} D.W. Barnes {\it On Levi's theorem for Leibniz algebras.} Bull. Aust. Math. Soc., 86(2), 2012, 184--185.

\bibitem{Dunbar} L. Bosko-Dunbar, J.D. Dunbar, J.T. Hird, K. Stagg {\it Solvable Leibniz algebras with Heisenberg nilradical.} Commun. Algebra, 43(6), 2015, 2272--2281.


\bibitem{Cabezas2} J.M. Cabezas, L.M. Camacho, I.M. Rodr\'{i}guez {\it On filiform and 2-filiform Leibniz algebras of maximum length.}  J. Lie Theory, 18, 2008, 335--350.

\bibitem{Camacho} L.M. Camacho, E.M. Ca\~{n}ete, J.R. G\'{o}mez, B.A. Omirov {\it Quasi-filiform Leibniz algebras of maximum length.} Sib. Math. J., 52(5) , 2011, 840--853.

\bibitem{Camacho6} L.M. Camacho, B.A. Omirov, K.K. Masutova {\it Solvable Leibniz algebras with filiform nilradical.} Bull. Malays. Math. Sci. Soc., 39(1), 2016, 283--303.

\bibitem{Nulfilrad} J.M. Casas, M. Ladra, B.A. Omirov, I.A. Karimjanov {\it Classification of solvable Leibniz algebras with null-filiform nilradical.} Linear Mult. Alg., 61(6), 2013, 758--774.

\bibitem{filrad} J.M. Casas, M. Ladra, B.A. Omirov, I.A. Karimjanov {\it Classiication of solvable Leibniz algebras with naturally graded filiform nilradical.} Linear Alg. Appl., 438(7), 2013, 2973--3000.

\bibitem{Iqbol} I.A. Karimjanov, A.K. Khudoyberdiyev, B.A. Omirov {\it Solvable Leibniz algebras with triangular nilradicals.} Linear Alg. Appl., 466, 2015, 530--546.

\bibitem{Khosiyat} Kh.A. Khalkulova, M. Ladra, B.A. Omirov, A.M. Sattorov {\it Solvable Leibniz algebras with quasi-filiform Lie algebras of maximum length nilradicals.} arXiv:18.01.2144223v1, 2018, pp. 15. 

\bibitem{Abror2} A.K. Khudoyberdiyev, M. Ladra, B.A. Omirov {\it On solvable Leibniz algebras whose nilradical is a direct
sum of null-filiform algebras.} Linear Mult. Alg., 62(9), 2014, 1220--1239.

\bibitem{Ladra} M. Ladra, K.K. Masutova, B.A. Omirov {\it Corrigendum to ``Classiication of solvable Leibniz algebras with naturally graded filiform nilradical'' [Linear Alg. Appl. 438 (7) (2013) 2973--3000].} Linear Alg. Appl., 507, 2016, 513--517.

\bibitem{Loday} J.-L. Loday {\it Une version non commutative des alg\`ebres de Lie: les alg\`ebres de Leibniz.} Enseign. Math. (2), 39(3-4), 1993, 269--293.

\bibitem{Mal} A.I. Malcev {\it Solvable Lie algebras.} Amer. Math. Soc. Trans., 36(27), 1950.

\bibitem{Mub} G.M. Mubarakzjanov {\it On solvable Lie algebras (Russian).} Izv. Vys\v s. U\v cehn. Zaved. Matematika,     32(1), 1963, 114--123.

\bibitem{NdWi} J.C. Ndogmo, P. Winternitz {\it Solvable Lie algebras with abelian nilradicals.} J. Phys. A, 27(2), 1994, 405--423.

\bibitem{Rubin} J. L. Rubin, P. Winternitz. {\it Solvable Lie algebras with Heisenberg ideals.} J. Phys. A, 26(5), 1993, 1123--1138.

\bibitem{Shab} A. Shabanskaya {\it Solvable extensions of naturally graded quasi-filiform Leibniz algebras of second type $\mathcal{L}^1$ and $\mathcal{L}^3$}. Comm. Algebra, 45(10), 2017, 4492--4520.

\bibitem{Shab2} A. Shabanskaya {\it Solvable indecomposable extensions of two nilpotent Lie algebras.} Commun. Algebra, 44(8), 2016, 3626--3667.

\bibitem{SnWi} L. {\v{S}}nobl, P. Winternitz {\it A class of solvable Lie algebras and their Casimir invariants.} J. Phys. A, 38(12), 2005, 2687--2700.

\bibitem{SnWi2} L. {\v{S}}nobl, P. Winternitz {\it All solvable extensions of a class of nilpotent Lie algebras of dimension n and degree of nilpotency $n-1.$} J. Phys. A, 42(10), 2009, 16.

\bibitem{TrWi} S. Tremblay, P. Winternitz {\it Solvable Lie algebras with triangular nilradicals.} J. Phys. A, 31(2), 1998, 789--806.

\bibitem{WaLiDe} Y. Wang, J. Lin, Sh. Deng {\it Solvable Lie algebras with quasifiliform nilradicals.} Comm. Algebra, 36(11), 2008, 4052--4067.

\end{thebibliography}
\end{document}